\documentclass[EJP,preprint]{ejpecp}

\usepackage{mathtools,bm}
\usepackage[tt=false]{libertine}
\usepackage[varbb]{newpxmath}
\mathtoolsset{showonlyrefs=true}
\usepackage{booktabs,tabularx,array}
\usepackage{placeins}
\usepackage{microtype}
\microtypesetup{expansion=false}

\newcommand{\R}{\mathbb R}
\newcommand{\E}{\mathbb E}
\newcommand{\dd}{\,\mathrm d}
\newcommand{\Ent}{\operatorname{Ent}}
\newcommand{\TV}{\mathrm{TV}}
\newcommand{\cC}{\mathsf C_R}
\newcommand{\Rop}{\mathsf R_{h,L}}
\newcommand{\hRop}{\widehat{\mathsf R}_{h,L,-i}^{N}}
\newcommand{\1}{\boldsymbol 1}

\SHORTTITLE{Propagation of chaos for density-dependent diffusions}

\TITLE{Density-Dependent McKean--Vlasov Diffusions:\\
Subgaussian Occupancy Bounds and Polynomial Propagation of Chaos\support{The present research is supported by the Deutsche Forschungsgemeinschaft through the BE 3961/7-1 ``Statistische Inferenz f\"ur Teilchensysteme und McKean-Vlasov-SDEs mit singul\"aren Kernen''.}}

\AUTHORS{%
  Denis Belomestny\footnote{Duisburg-Essen University. \EMAIL{denis.belomestny@uni-due.de}}
  \and
  Ekaterina Morozova\footnote{Duisburg-Essen University. \EMAIL{ekaterina.morozova@uni-due.de}}
}

\KEYWORDS{McKean-Vlasov diffusions; propagation of chaos; interacting particle systems; relative entropy; density dependence}
\AMSSUBJ{60K35; 60J60}

\SUBMITTED{July 20, 2026}
\ACCEPTED{}

\VOLUME{0}
\YEAR{2026}
\PAPERNUM{0}
\DOI{10.1214/YY-TN}

\ABSTRACT{%
We study the local density-dependent diffusion $\dd Y_t=-\Xi(p_t(Y_t))\nabla\Phi(Y_t)\dd t+\sqrt2\,\dd W_t$ and a clipped, randomly shifted histogram particle approximation on $\R^d$. The central difficulty is that the empirical density is evaluated at the particles' locations and re-enters their drift, while the confining force $\nabla\Phi$ may be unbounded. We provide a path-space entropy proof under two verifiable analytic conditions: a uniform pointwise Gaussian envelope for the true density $p_t$, and a Gaussian--polynomial bound for its spatial gradient $\nabla p_t$. The potential is allowed to have a gradient of at most linear growth.
The probabilistic input is a weighted exponential occupancy estimate under the independent product law. It is proved by Poissonizing the system at total intensity $N-1$, performing a one-cell leave-one-out estimate bounded via Poisson information, using Gaussian cell summability, and de-Poissonizing. For every fixed time horizon $T$, we obtain $\Ent(P_t^{N,k}\mid p_t^{\otimes k})\le C_T k(h^2(1+|\log h|)\allowbreak+(h^{-d}+\log N)/N)$. Consequently, selecting the optimally balanced bandwidth $h\asymp (N\log N)^{-1/(d+2)}$ yields a total variation error of $\|P_t^{N,k}-p_t^{\otimes k}\|_{\TV}\le C_T\sqrt{k}\,N^{-1/(d+2)}\allowbreak(\log N)^{d/[2(d+2)]}$ for fixed $k$. This includes the usual Ornstein--Uhlenbeck density and the density-dependent OU model whenever the PDE estimates hold on the considered interval.
Furthermore, the histogram estimator offers a scalable approach for particle approximations. Using occupied-cell hashing, one algorithm step evaluates in expected $O(dLN)$ operations under standard constant-time hashing assumptions. For a fixed dimension and number of shifts, this requires expected $O(N)$ time, avoiding the $O(N^2)$ evaluation cost typical of standard kernel density estimators.
}

\begin{document}

\section{Introduction}

Classical McKean--Vlasov equations depend continuously on the law through integrals like $K*\mu$. Local density dependence is different: the map $\mu\mapsto (\dd\mu/\dd x)(x)$ is not continuous for weak or Wasserstein topologies. This destroys the direct Lipschitz coupling argument behind the standard propagation-of-chaos theory \cite{Sznitman1991,Meleard1996}. The problem is delicate for the SDE
\begin{equation}
 \dd Y_t=-\Xi(p_t(Y_t))\nabla\Phi(Y_t)\dd t+\sqrt2\,\dd W_t, \qquad \mathcal L(Y_t)(\dd x)=p_t(x)\dd x,
 \label{eq:limit-sde}
\end{equation}
because the local density is multiplied by a potentially unbounded confining force.
This model arises in Markov Chain Monte Carlo (MCMC) and generative modelling. By setting $m(r)=r\Xi(r)$ and $\mathsf h''(r)=1/(r\Xi(r))$, its Fokker--Planck equation has the generalized-mobility gradient-flow form $\partial_t p = \nabla\!\cdot(m(p)\nabla(\mathsf h'(p)+\Phi))$. Thus, a nonconstant $\Xi$ changes the mobility and the internal energy, rather than merely replacing the confining potential. For the reversible zero-flux stationary density characterized in \cite{BelomestnyMorozova}, one has $\pi(x)=g^{-1}(\mu-\Phi(x))$ with $g(r)=\int_1^r (s\Xi(s))^{-1}\dd s$, where $\mu$ normalizes the measure. A nonconstant mobility can reshape high- and low-density regions while maintaining a strongly confining potential. If one desires to sample from a prescribed target distribution $\pi_\star$, one must select $\Phi(x) = \mu - g(\pi_\star(x))$. Setting $\Phi = -\log \pi_\star$ is insufficient because a nonconstant mobility $\Xi$ generally changes the target distribution. While establishing finite-time propagation of chaos validates the particle approximation, algorithmic convergence (long-time mixing) is not addressed.
The standard particle approximation replaces $p_t(X_t^i)$ by a Kernel Density Estimate (KDE). Its constants involve $\|G_h\|_\infty\asymp h^{-d}$ or $\|\nabla G_h\|_\infty\asymp h^{-d-1}$. Inserting these into a direct Gronwall estimate creates an exponential bound with a negative power of $h$ in the exponent. The observation of this note is that the SDE only needs the drift error after applying the bounded function $\Xi$. Clipping a histogram density before applying $\Xi$ converts that error into a local occupancy statistic. Its exponential moment is then controlled by the Gaussian rarity of remote cells and by Poisson information, without any bandwidth-dependent Gronwall coefficient.
The histogram choice also resolves a main computational bottleneck in particle algorithms. Directly evaluating a KDE requires $O(N^2)$ distance computations per time step. With shifted histograms, one pass through the particles constructs a hash table of occupied-cell counts, taking expected $O(dLN)$ operations. 
Our contribution is fourfold. First, we provide a complete proof of the weighted product-law occupancy estimate on the whole space. Second, we insert it into a path-space Girsanov relative-entropy identity to obtain a propagation-of-chaos theorem avoiding discontinuous-drift PDE complications. Third, we isolate conditions checkable from PDE estimates: a Gaussian density envelope, Gaussian--polynomial control of the density gradient (allowing for small-time $t^{-1/2}$ singularities), and at-most-linear growth of $\nabla\Phi$. Fourth, we detail an expected $O(dLN)$ hashing implementation for algorithmic execution.

\section{Literature Overview}

Local density-dependent nonlinear diffusions and their particle approximations have a classical history. Oelschl\"ager \cite{Oelschlager1985,Oelschlager1987} established law-of-large-numbers and fluctuation results for moderately interacting diffusion processes, and M\'el\'eard and Roelly-Coppoletta \cite{MeleardRoelly1987} proved propagation of chaos for systems with moderate interaction. Most directly related to the present equation, Jourdain and M\'el\'eard \cite{JourdainMeleard1998} considered nonlinear SDEs whose drift and diffusion coefficients depend locally on the density of the time marginal. Under strong smoothness assumptions, they approximated the equation by smooth mollified moderately interacting systems and proved trajectorial propagation of chaos and fluctuation results. The present work differs in that the estimator is a discontinuous leave-one-out histogram, the density enters through the nonlinear transform $\Xi$, the confining force may grow linearly, and the conclusion is an explicit relative-entropy bound obtained through a weighted occupancy estimate.
Interacting particle systems are used in computational statistics for approximate sampling. Methods like Stein Variational Gradient Descent (SVGD) \cite{LiuWang2016} use deterministic pairwise-kernel interacting transport to distribute particles, typically requiring an $O(N^2)$ evaluation cost. Interacting Langevin diffusions such as the ensemble Kalman sampler \cite{GarbunoInigo2020} employ empirical covariance matrices for affine-invariant sampling. Theoretical guarantees for these systems often rely on bounded forces, compact domains, or highly regularized kernels. The framework introduced here allows unbounded forces, including strongly confining examples, whenever the required density estimates hold.
It is instructive to contrast the present approach with the modern moderate-interaction framework of Chen, Holzinger, and Huo \cite{ChenHolzingerHuo2025}. Their local limit comes from a smooth convolution kernel, and their proof uses a regularized $L^2$ coercive structure. Here, the estimated field is a discontinuous histogram evaluated at the tagged particle. The nonlinear bounded transform $\Xi$, the clipping operation, and the unbounded spatial weight prevent the use of $L^2$ coercivity and require the new occupancy estimate.
Other theories provide partial ingredients but do not directly cover a shrinking local-density estimator inside a nonlinear unbounded drift (see Table~\ref{tab:literature}). Local entropy hierarchies give marginal estimates for pairwise or smooth measure interactions \cite{Lacker2023,ArneseLacker2026}; higher-order $L^2$ expansions reveal connected correlations \cite{HessChildsRowan2025}; Fisher-information hierarchies handle smooth interactions \cite{GrassPoquetGuillin2025}. Statistical change-of-measure methods give Bernstein inequalities for previously controlled systems \cite{DellaMaestraHoffmann2022}, while conditional Hilbert-space bounds handle singular kinetic kernels \cite{HaoZhangZhao2026}. The required density bounds for the density-dependent OU model are supplied by Belomestny and Morozova \cite{BelomestnyMorozova}. 

\begin{table}[htbp]
\centering
\caption{Comparison with quantitative propagation-of-chaos methods.}
\label{tab:literature}
\renewcommand{\arraystretch}{1.16}
\footnotesize
\begin{tabularx}{\textwidth}{>{\raggedright\arraybackslash}p{0.19\textwidth} >{\raggedright\arraybackslash}p{0.22\textwidth} >{\raggedright\arraybackslash}X}
\toprule
Methodology & Main mechanism & Relation to the present model\\
\midrule
Sznitman \cite{Sznitman1991}; M\'el\'eard \cite{Meleard1996} & Synchronous coupling & Pointwise density evaluation is discontinuous in weak or Wasserstein topologies.\\
Oelschl\"ager \cite{Oelschlager1985,Oelschlager1987}; M\'el\'eard \& Roelly \cite{MeleardRoelly1987}; Jourdain \& M\'el\'eard \cite{JourdainMeleard1998} & Smooth moderate interactions and local density dependence & Requires strong smoothness and regularized kernels; does not cover discontinuous histogram estimators.\\
Lacker \cite{Lacker2023}; Arnese \& Lacker \cite{ArneseLacker2026} & BBGKY local entropy hierarchies & Handles nonlinear measure dependence, but assumptions do not directly accommodate discontinuous shrinking estimators.\\
Hess--Childs \& Rowan \cite{HessChildsRowan2025} & $L^2$ hierarchy & Explains connected low-order terms but does not close the nonlinear histogram drift.\\
Chen et al.\ \cite{ChenHolzingerHuo2025} & Regularized $L^2$ relative entropy & Relies on a coercive convolution structure absent from our nonlinear drift.\\
Liu \& Wang \cite{LiuWang2016} & SVGD and related kernelized methods & Uses deterministic pairwise-kernel interacting transport creating an $O(N^2)$ evaluation cost.\\
Garbuno-Inigo et al.\ \cite{GarbunoInigo2020} & Ensemble Kalman sampler & Uses empirical covariance interactions rather than local density estimation.\\
Present note & Poissonized occupancy \& entropy transfer & Exploits leave-one-out histogram structure to absorb linear confinement growth.\\
\bottomrule
\end{tabularx}
\end{table}

\FloatBarrier

\section{Model, Estimator, and Analytic Assumptions}

Let $0<h\le1$, $v=h^d$, and fix deterministic shifts $U_1,\ldots,U_L\in[0,h)^d$. For each shift $U_\ell$, let $\{B_m^{U_\ell}:m\in\mathbb Z^d\}$ be a partition of $\R^d$ into half-open cubes of side $h$, such that cell membership is unambiguous on boundaries. The shifts may be sampled once and frozen; the following bounds hold conditionally and uniformly for every deterministic realization. 
We define the cell average $\Pi_{h,U_\ell}q(x)=\frac1v\int_{B_m^{U_\ell}}q(y)\dd y$ for $x\in B_m^{U_\ell}$. Using a standard clipping function $\cC(r)=r\wedge R$, the population field is defined as $\Rop q(x)=\frac1L\sum_{\ell=1}^L \cC(\Pi_{h,U_\ell}q(x))$.
For a discrete particle configuration $x=(x_1,\ldots,x_N)$, let $n_{m,\ell}(x)$ be the number of particles in $B_m^{U_\ell}$. If $x_i\in B_{m(i,\ell)}^{U_\ell}$, we define the leave-one-out empirical field as
\begin{equation}
 \hRop(x_i;x)=\frac1L\sum_{\ell=1}^L \cC\!\left( \frac{n_{m(i,\ell),\ell}(x)-1}{(N-1)v}\right).
 \label{eq:empirical-field}
\end{equation}
The interacting particle system is
\begin{equation}
 \dd X_t^{i,N} =-\Xi(\hRop(X_t^{i,N};X_t))\nabla\Phi(X_t^{i,N})\dd t +\sqrt2\,\dd W_t^i, \qquad i=1,\ldots,N.
 \label{eq:particle-system}
\end{equation}

\begin{assumption}
\label{ass:coefficients}
The function $\Xi:[0,\infty)\to(0,\infty)$ is bounded and globally Lipschitz, meaning $0<\kappa\le\Xi(r)\le K$ and $|\Xi(r)-\Xi(s)|\le L_\Xi|r-s|$. The potential $\Phi\in C^1(\R^d)$ has a gradient with at most linear growth: $|\nabla\Phi(x)|^2\le C_\Phi(1+|x|^2)$.
\end{assumption}

\begin{assumption}
\label{ass:density}
For a fixed time horizon $T<\infty$, \eqref{eq:limit-sde} is well posed, its density solves the PDE 
\begin{equation}
\partial_t p_t=\Delta p_t+ \nabla\!\cdot(p_t\Xi(p_t)\nabla\Phi),
\end{equation}
and there are $C_0,c_0,C_1,a_1>0$ and $m\ge0$ such that for $0 < t\le T$ and $x\in\R^d$:
\begin{equation}
 p_t(x)\le C_0e^{-c_0|x|^2}, \qquad |\nabla p_t(x)|\le C_1\max\bigl\{1,t^{-1/2}\bigr\}(1+|x|)^m e^{-a_1|x|^2}.
 \label{eq:density-gradient-envelope}
\end{equation}
The algorithmic clipping level satisfies $R>C_0$.
\end{assumption}

\section{Main Propagation-of-Chaos Theorem}

Let $P_t^N$ denote the time-marginal law of \eqref{eq:particle-system}, initialized from i.i.d.\ draws $P_0^N=p_0^{\otimes N}$. Let $P_t^{N,k}$ be its $k$-particle marginal. By evaluating the relative entropy on the path space using Girsanov's theorem, we bypass the need for density regularity of the discontinuous empirical SDE.

\begin{lemma}
\label{lem:entropy-production}
Let $\mathbb{Q}^N$ be the law of $N$ independent copies of the limit diffusion \eqref{eq:limit-sde}, so that $Q_t^N = p_t^{\otimes N}$. Let $\mathbb{P}^N$ be the law of the interacting particle system \eqref{eq:particle-system} on the canonical path space $C([0,T]; (\R^d)^N)$. Then
\begin{equation}
    \sup_{i\le N} \E_{\mathbb{P}^N} \sup_{s\le T} |X_s^{i,N}|^2 < \infty,
\end{equation}
the particle system admits a unique weak solution, and the marginal relative entropy satisfies
\begin{equation}
    \Ent(P_t^N \mid Q_t^N) \le \frac14 \int_0^t \E_{P_s^N} |b_s^N - \bar b_s|^2 \dd s,
\end{equation}
where $b^N$ and $\bar b$ are the interacting and reference drifts, respectively.
\end{lemma}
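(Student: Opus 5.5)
The plan is to build the interacting system by a change of measure from an easy reference process, then read off the entropy bound from Girsanov and the data-processing inequality.

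\emph{Step 1: construction and moments.} Write the particle drift as $b^N_i(x)=-\Xi(\hRop(x_i;x))\nabla\Phi(x_i)$. Since $\Xi\le K$ and $|\nabla\Phi(y)|^2\le C_\Phi(1+|y|^2)$, we have $|b^N_i(x)|\le K\sqrt{C_\Phi}(1+|x_i|)$. This bound is uniform in the configuration, although $b^N$ is only measurable, being discontinuous through the cell counts. Start from the reference system $\mathbb Q^N$: $N$ independent copies of \eqref{eq:limit-sde}, which exist by Assumption~\ref{ass:density}. The reference drift $\bar b_i(x)=-\Xi(p_t(x_i))\nabla\Phi(x_i)$ obeys the same linear-growth bound. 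Under $\mathbb Q^N$ the coordinates have second moments, uniformly in time, by the Gaussian envelope \eqref{eq:density-gradient-envelope}.

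\emph{Step 2: Girsanov and uniqueness.} Set $\theta_s=(b^N_s-\bar b_s)(Y_s)/\sqrt2$ and $Z_T=\mathcal E(\int\theta\cdot\dd W)_T$. The main obstacle is showing that $Z$ is a true martingale, because $|\theta|^2\lesssim 1+|Y|^2$ is unbounded, so Novikov's condition cannot be applied on all of $[0,T]$ at once. I would try two routes.
\begin{itemize}
\item Apply the Beneš-type criterion for drifts of linear growth: since $|\theta_s|\le C(1+\sup_{r\le s}|Y_r|)$, the exponential is a martingale. Equivalently, split $[0,T]$ into short intervals $[t_j,t_{j+1}]$ on which $\E\exp(\tfrac12\int|\theta|^2)<\infty$ holds by Fernique-type Gaussian tail bounds for $\sup_{s}|Y_s|$.
\item Alternatively, localize with $\tau_n=\inf\{s:|Y_s|\ge n\}$, use Girsanov up to $\tau_n$, and show $\mathbb P^N(\tau_n\le T)\to0$ from the linear growth of $b^N$ via Gronwall on $\E\sup|X|^2$.
\end{itemize}
Either route yields a weak solution $\mathbb P^N=Z_T\cdot\mathbb Q^N$. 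Uniqueness in law also follows from Girsanov: any weak solution satisfying the moment bound can be transformed back to the reference system, whose law is unique. Equivalently, transform to Brownian motion with the bounded-on-compacts drift and invoke the Stroock–Varadhan/Girsanov uniqueness for drifts of linear growth.

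\emph{Step 3: moment bound.} Itô's formula for $|X^i|^2$, followed by BDG and Gronwall with the growth bound from Step 1, gives $\E\sup_{s\le T}|X_s^{i,N}|^2\le C(1+\E|X_0^i|^2)e^{CT}$. The right-hand side is finite because $p_0$ has Gaussian tails, and the constant is independent of $i$ and $N$.

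\emph{Step 4: entropy.} On path space,
\[
\Ent(\mathbb P^N|_{[0,t]}\mid\mathbb Q^N|_{[0,t]})=\E_{\mathbb P^N}\log Z_t .
\]
Under $\mathbb P^N$ we have $\log Z_t=\int\theta\cdot\dd\widetilde W+\tfrac12\int|\theta|^2$, where $\widetilde W$ is a $\mathbb P^N$-Brownian motion. The stochastic integral has zero mean because $\E_{\mathbb P^N}\int|\theta|^2<\infty$ by Step 3. Therefore
\[
\Ent=\tfrac12\int_0^t\E\bigl|(b^N-\bar b)/\sqrt2\bigr|^2\dd s=\tfrac14\int_0^t\E_{P_s^N}|b^N_s-\bar b_s|^2\dd s .
\]
If the martingale property is in doubt, I would use the localized version together with lower semicontinuity of the entropy. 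Finally, the data-processing inequality under the time-$t$ projection bounds $\Ent(P^N_t\mid Q^N_t)$ by the path entropy, which gives the stated inequality.
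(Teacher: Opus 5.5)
Your proposal is correct and follows essentially the same route as the paper. Both arguments tilt the reference law $\mathbb Q^N$ by the Girsanov density, justified via Novikov/Bene\v{s} for linear-growth drifts and Gaussian-tailed paths. Both get uniqueness from reverse Girsanov, the moment bound from It\^o--Gronwall, and pass from the path-space entropy $\frac14\E\int|b^N-\bar b|^2$ to the time-$t$ marginal by data processing. The only difference is that you compute $\E_{\mathbb P^N}\log Z_t$ directly, using the moment bound to center the stochastic integral. The paper instead always works with laws stopped at $\tau_R$ and removes the localization by lower semicontinuity and monotone convergence, which you keep only as a fallback.
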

\begin{proof}
Because $\Xi$ is bounded and $\nabla\Phi$ has at most linear growth, the drift $b_i^N$ has at most linear growth. Existence and uniqueness of weak solutions to \eqref{eq:particle-system} follow by Girsanov's theorem: the drift is measurable and satisfies $|b_s^N(x)| \le C(1+|x|)$, and the reference process has finite exponential moments. By the Novikov condition (or more generally, the Bene\v{s} criterion for this linear-growth structure), the exponential martingale is a true martingale. Application of It\^o's formula and Gronwall's inequality yields the uniform second-moment bound. 
Introducing the stopping times $\tau_R = \inf\{t \ge 0 : \max_{i\le N} |X_t^{i,N}| \ge R\}$, the stopped drifts are globally bounded. Let $\mathbb{P}^{N,R}$ and $\mathbb{Q}^{N,R}$ denote the path laws of the stopped systems. We construct the solution by tilting the uniquely defined reference path law, and reverse Girsanov provides uniqueness in law. The explicit Radon-Nikodym derivative yields the path-space relative entropy equality for the stopped laws:
\begin{equation}
    \Ent\!\left(\mathbb{P}^{N,R}_{[0,t]} \mid \mathbb{Q}^{N,R}_{[0,t]}\right) = \frac14 \E_{\mathbb{P}^{N,R}} \int_0^{t\wedge\tau_R} |b_s^N - \bar b_s|^2 \dd s.
\end{equation}
The $1/4$ factor appears because the diffusion coefficient is $\sqrt{2}$, making the variance matrix $2I$. By the data processing inequality under the time-$t$ evaluation map, the marginal relative entropy is bounded by the path-space relative entropy:
\begin{equation}
    \Ent(P_t^{N,R} \mid Q_t^{N,R}) \le \Ent\!\left(\mathbb{P}^{N,R}_{[0,t]} \mid \mathbb{Q}^{N,R}_{[0,t]}\right).
\end{equation}
As $R \to \infty$, the stopping times $\tau_R \to \infty$ almost surely. The stopped marginals converge weakly to the unstopped marginals. By the weak lower semicontinuity of relative entropy, $\Ent(P_t^N \mid Q_t^N) \le \liminf_{R \to \infty} \Ent(P_t^{N,R} \mid Q_t^{N,R})$. On the right-hand side, since the stopped law $\mathbb{P}^{N,R}$ coincides with $\mathbb{P}^N$ up to the stopping time $\tau_R$, we can evaluate the expectation under the fixed measure $\mathbb{P}^N$. The integrand is non-negative, so the monotone convergence theorem ensures
\begin{equation}
    \E_{\mathbb{P}^{N,R}} \int_0^{t\wedge\tau_R} |b_s^N - \bar b_s|^2 \dd s = \E_{\mathbb{P}^N} \int_0^{t\wedge\tau_R} |b_s^N - \bar b_s|^2 \dd s \uparrow \E_{\mathbb{P}^N} \int_0^t |b_s^N - \bar b_s|^2 \dd s.
\end{equation}
This establishes the relative entropy inequality on the unstopped marginals.
\end{proof}

\begin{theorem}
\label{thm:main-poc}
Under Assumptions~\ref{ass:coefficients} and \ref{ass:density}, there exists a constant $C_T<\infty$, independent of $N\ge2$, $0<h\le1$, $L$, and the shift locations, such that for $0\le t\le T$:
\begin{equation}
 \Ent\!\left(P_t^N\mid p_t^{\otimes N}\right) \le C_T\bigl(Nh^2(1+|\log h|)+h^{-d}+\log N\bigr).
 \label{eq:full-entropy-main}
\end{equation}
Consequently, for any $1\le k\le N$, the marginal entropy and total variation distance satisfy:
\begin{align}
 \Ent\!\left(P_t^{N,k}\mid p_t^{\otimes k}\right) &\le C_T k\left(h^2(1+|\log h|)+\frac{h^{-d}+\log N}{N}\right), \label{eq:local-entropy-main}\\
 \|P_t^{N,k}-p_t^{\otimes k}\|_{\TV} &\le C_T\sqrt{k} \left(h\sqrt{1+|\log h|}+\sqrt{\frac{h^{-d}+\log N}{N}}\right). \label{eq:tv-main}
\end{align}
Furthermore, if $b_i^N$ is the empirical drift in \eqref{eq:particle-system} and $\bar b_t(x)=-\Xi(p_t(x))\nabla\Phi(x)$, the time-integrated expected drift error obeys
\begin{equation}
 \int_0^T\E_{P_t^N}|b_i^N(X_t)-\bar b_t(X_t^{i,N})|^2\dd t \le C_T\left[h^2(1+|\log h|)+\frac{h^{-d}+\log N}{N}\right].
 \label{eq:drift-chaos-main}
\end{equation}
The constant $C_T$ tracks the application of Gronwall's inequality and may grow exponentially like $e^{C T}$ or $e^{C T}$ times a polynomial, depending on the parameters $T$, $d$, $C_\Phi$, $\kappa$, $K$, $L_\Xi$, $C_0$, $c_0$, $C_1$, $a_1$, and $m$.
\end{theorem}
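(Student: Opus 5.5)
We start from Lemma~\ref{lem:entropy-production} and decompose the drift error for particle $i$ as
\begin{align}
 b_i^N(X_t)-\bar b_t(X_t^{i,N})
 &=-\Bigl[\Xi\bigl(\hRop(X_t^{i,N};X_t)\bigr)-\Xi\bigl(\Rop p_t(X_t^{i,N})\bigr)\Bigr]\nabla\Phi(X_t^{i,N})\\
 &\quad-\Bigl[\Xi\bigl(\Rop p_t(X_t^{i,N})\bigr)-\Xi\bigl(p_t(X_t^{i,N})\bigr)\Bigr]\nabla\Phi(X_t^{i,N}).
\end{align}
The second term is a deterministic bias.

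\emph{Bias term.} Since $R>C_0\ge p_t$, clipping does not change $p_t$. Hence $|\Rop p_t(x)-p_t(x)|\le \frac1L\sum_\ell|\Pi_{h,U_\ell}p_t(x)-p_t(x)|\le \sqrt d\,h\sup_{|y-x|\le\sqrt d h}|\nabla p_t(y)|$. By Assumption~\ref{ass:density} and the Lipschitz property of $\Xi$, the weighted error $|\cdot|^2|\nabla\Phi|^2$ is bounded by $Ch^2\max\{1,t^{-1}\}(1+|x|)^{2m+2}e^{-2a_1|x|^2/2}$, uniformly in $x$. This bound is polynomial times Gaussian, so it is bounded. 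The problem is the $t^{-1}$ singularity, which is not integrable. I would therefore also use the trivial bound $|\Rop p_t-p_t|\le 2C_0$, so that the error is at most $C\min\{1,h^2t^{-1}\}$ times a function of $x$ that is bounded. Integrating over $[0,T]$ gives $\int_0^T\min\{1,h^2/t\}\,dt\le Ch^2(1+|\log h|)$, which is exactly the source of the logarithm. The bound is deterministic and uniform in $x$, so its expectation under $P_t^N$ causes no difficulty, and summing over $N$ particles gives the $Nh^2(1+|\log h|)$ term.

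\emph{Fluctuation term.} The plan is a change of measure to the product law. Set $F_t(x)=\sum_i|\Xi(\hRop(x_i;x))-\Xi(\Rop p_t(x_i))|^2|\nabla\Phi(x_i)|^2$. By the Donsker--Varadhan inequality, for $\lambda>0$,
\[
 \E_{P_t^N}F_t\le \lambda^{-1}\Bigl(\Ent(P_t^N\mid p_t^{\otimes N})+\log\E_{p_t^{\otimes N}}e^{\lambda F_t}\Bigr).
\]
The key probabilistic input, which I expect to be the main obstacle, is the weighted exponential occupancy bound
\[
 \log\E_{p_t^{\otimes N}}e^{\lambda F_t}\le C(h^{-d}+\log N)
\]
for some small fixed $\lambda$, uniformly in $t$, $h$ and the shifts. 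To prove it I would Poissonize: replace $N-1$ points by a Poisson process with intensity $(N-1)p_t$, so that cell counts are independent Poisson variables. The clipped, Lipschitz difference in a cell $m$ with mean $\mu_m=(N-1)\int_{B_m}p_t$ then has exponential moment controlled by Poisson concentration, of the form $\exp(c\lambda w_m^2\min\{R^2,\,\mu_m/((N-1)v)^2\})$. Here the weight $w_m^2\sim 1+|x_m|^2$ is offset by the Gaussian smallness of $\mu_m$ and by clipping at $R$. Summing the log-moments over cells, with the Gaussian envelope making the sum of the weighted contributions over remote cells finite, gives $O(h^{-d})$. The $\log N$ term comes from de-Poissonizing, since $P(\mathrm{Poi}(N-1)=N-1)\gtrsim N^{-1/2}$, and from handling the tagged particle via leave-one-out. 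The $L$ shifts are handled by Jensen, averaging over $\ell$.

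\emph{Closing.} Write $H(t)=\Ent(P_t^N\mid p_t^{\otimes N})$. Combining Lemma~\ref{lem:entropy-production} with the two estimates gives
\[
 H(t)\le C\int_0^t H(s)\,ds+C\bigl(Nh^2(1+|\log h|)+h^{-d}+\log N\bigr),
\]
with $\lambda$ fixed so that the constant in front of $H$ is $O(1)$. Gronwall's inequality then yields \eqref{eq:full-entropy-main}. Subadditivity of entropy for exchangeable laws, $\Ent(P^{N,k}\mid p^{\otimes k})\le \frac{k}{N}\cdot 2\Ent(P^N\mid p^{\otimes N})$ (up to the floor of $N/k$), gives \eqref{eq:local-entropy-main}. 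Pinsker's inequality then gives \eqref{eq:tv-main}. Finally, \eqref{eq:drift-chaos-main} follows by reinserting the entropy bound into the Donsker--Varadhan step, using exchangeability, and dividing by $N$.
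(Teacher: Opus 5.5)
Your architecture matches the paper's. You split the drift error through $\Rop p_t$ and bound the bias by $C_T\min\{1,h^2/t\}$, which integrates to $h^2(1+|\log h|)$. You control the fluctuation by Donsker--Varadhan against $p_t^{\otimes N}$ with a weighted occupancy bound of order $h^{-d}+\log N$. You then finish with Gronwall, block superadditivity and Pinsker (the paper uses Shearer, which removes your factor $2$). Taking Theorem~\ref{thm:weighted-mgf} as given, the argument closes after one repair in the bias step. The trivial bound $|\Rop p_t-p_t|\le 2C_0$ does not give ``$\min\{1,h^2/t\}$ times a bounded function of $x$'': after multiplying by $|\nabla\Phi(x)|^2\le C_\Phi(1+|x|^2)$ it grows like $|x|^2$. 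The paper instead uses that cell averages inherit the envelope, $\Pi_{h,U}p_t(x)\le C_0e^{-c_0(|x|-\sqrt d\,h)_+^2}\le Ce^{-c'|x|^2}$. Hence $|\Rop p_t-p_t|\le Ce^{-c'|x|^2}$, which absorbs the weight (Lemma~\ref{lem:weighted-bias}).

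The genuine gap is your sketch of the occupancy bound itself. Set $a=(N-1)h^d$, $\lambda_m=ar_m$ and $w_m=1+\sup_{B_m}|x|^2$ (your $w_m^2$). The cell contribution is then $Z_m=K_mw_m|\cC((K_m-1)_+/a)-\cC(r_m)|^2$. In dense cells its typical size is $w_mr_m^2$ (count $ar_m$ times squared fluctuation $r_m/a$), not the $w_m\min\{R^2,r_m/a\}$ of your heuristic. Neither concentration nor clipping controls its exponential moment uniformly in $a$. For instance, the clipped bound $Z_m\le K_mw_mR^2$ only gives $\sum_m\lambda_m(e^{\alpha w_mR^2}-1)\ge\alpha R^2(N-1)$.

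The missing idea is to compare $Z(k)$ with the Poisson rate function $I_\lambda(k)=k\log(k/\lambda)-k+\lambda$:
\begin{itemize}
\item Always, $Z\le C(I_\lambda+1)$.
\item On $\{x_k\le\sqrt r\}$, $Z\le Cwr(I_\lambda+1)$ with $wr\lesssim r^{1/2}$, because the Gaussian envelope forces $w_m\lesssim\log(A_0/r_m)$.
\item On $\{x_k>\sqrt r\}$, $k/\lambda>r^{-1/2}$, so $I_\lambda(k)\ge ck\log(k/\lambda)\ge c(1+\log(A/r))$, which dominates the logarithmic weight.
\end{itemize}
Combined with $\sup_{\lambda>0}\E e^{\theta I_\lambda(K_\lambda)}<\infty$ for $\theta<1$, this yields $\log\E e^{\alpha_0Z_m}\le Cr_m^\rho$ uniformly in $a$, and $\sum_mr_m^\rho\le C_\rho h^{-d}$.

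Finally, the leave-one-out already sits inside $(n_m-1)_+$ with total count $N$. So one Poissonizes the whole occupancy vector at intensity $N-1$ and conditions on the total being $N$, at cost $\mathbb P(\mathrm{Poi}(N-1)=N)\asymp N^{-1/2}$. No separate tagged-particle step is needed.
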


\begin{corollary}
\label{cor:optimized}
Setting the bandwidth to $h_N\asymp (N\log N)^{-1/(d+2)}$ balances the polynomial and logarithmic error terms, yielding 
\begin{align}
 \sup_{t\le T} \Ent(P_t^{N,k}\mid p_t^{\otimes k}) &\le C_TkN^{-2/(d+2)}(\log N)^{d/(d+2)}, \\
 \sup_{t\le T}\|P_t^{N,k}-p_t^{\otimes k}\|_{\TV} &\le C_T\sqrt{k}\,N^{-1/(d+2)}(\log N)^{d/[2(d+2)]}.
\end{align}
\end{corollary}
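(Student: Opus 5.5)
The corollary follows from the marginal bounds \eqref{eq:local-entropy-main} and \eqref{eq:tv-main} of Theorem~\ref{thm:main-poc} by substituting $h=h_N$. Since the constant $C_T$ in the theorem is uniform in $t\in[0,T]$, the suprema over $t\le T$ come for free. The plan has three steps.

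First, evaluate each term of the bracket in \eqref{eq:local-entropy-main} at $h_N=c(N\log N)^{-1/(d+2)}$, with $c>0$ fixed and $N\ge 3$ so that $\log N\ge 1$ (the case $N=2$ is absorbed into the constant). Write $h_N^2$ for the common scale. One has
\[ h_N^2 \asymp N^{-2/(d+2)}(\log N)^{-2/(d+2)}, \]
and the quantities we need to compare to it are:
\begin{itemize}
\item Logarithmic factor: $|\log h_N|=\tfrac1{d+2}\log(N\log N)+O(1)\le C\log N$. Hence $h_N^2(1+|\log h_N|)\le C N^{-2/(d+2)}(\log N)^{1-2/(d+2)} = CN^{-2/(d+2)}(\log N)^{d/(d+2)}$.
\item Variance term: $h_N^{-d}/N \asymp N^{d/(d+2)-1}(\log N)^{d/(d+2)} = N^{-2/(d+2)}(\log N)^{d/(d+2)}$.
\item Remaining term: $\log N/N$ is of smaller order than $N^{-2/(d+2)}$ for $d\ge1$, since $\log N\cdot N^{-d/(d+2)}\to0$. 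It is therefore bounded by a constant times the same rate.
\end{itemize}
Summing the three terms and multiplying by $k$ gives the entropy bound.

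Second, the total variation statement follows in either of two ways. One can take square roots of the three terms in \eqref{eq:tv-main}, using $\sqrt{a+b}\le\sqrt a+\sqrt b$, which gives the rate $N^{-1/(d+2)}(\log N)^{d/[2(d+2)]}$. Alternatively, one can apply Pinsker's inequality directly to the entropy bound just obtained.

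The only point needing care is that $h_N\le1$, which the theorem requires. This holds for $N$ large, or for all $N\ge 2$ once $c$ is chosen small enough. Any finitely many exceptional $N$ are absorbed into $C_T$, because the total variation distance is at most $2$ and the entropy bound from the theorem is finite for each fixed $N$. I expect no real obstacle. The one check that matters is that the logarithmic factor $(1+|\log h_N|)$ is balanced correctly: the power $(\log N)^{d/(d+2)}$ arises precisely because $h_N$ contains the factor $(\log N)^{-1/(d+2)}$.
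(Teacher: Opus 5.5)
Your proposal is correct and follows the paper's argument. You substitute $h_N\asymp(N\log N)^{-1/(d+2)}$ into the marginal bounds of Theorem~\ref{thm:main-poc}, check that $h_N^2(1+|\log h_N|)$ and $h_N^{-d}/N$ are both of order $N^{-2/(d+2)}(\log N)^{d/(d+2)}$, note that $(\log N)/N$ is of lower order for $d\ge1$, and then take square roots (or use Pinsker) for the total variation bound. Your remarks on uniformity in $t$, on choosing the constant so that $h_N\le1$, and on small $N$ cover details the paper leaves implicit.
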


\begin{corollary}
\label{cor:growing-marginals}
The result allows growing marginal sizes $1\le k_N\le N$. Provided 
\[
k_N N^{-2/(d+2)}(\log N)^{d/(d+2)} \to 0,
\]
 the relative entropy vanishes asymptotically, extending the propagation of chaos to growing ensembles.
\end{corollary}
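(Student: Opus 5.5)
The plan is to derive Corollary~\ref{cor:growing-marginals} directly from the marginal entropy bound \eqref{eq:local-entropy-main} of Theorem~\ref{thm:main-poc}, with the bandwidth of Corollary~\ref{cor:optimized}. The point to check is that nothing in that chain fixes $k$. The constant $C_T$ in Theorem~\ref{thm:main-poc} is explicitly independent of $N$, $h$, $L$ and the shifts, and its parameter list contains no dependence on $k$. Moreover, the dependence on $k$ in \eqref{eq:local-entropy-main} is exactly linear and holds for every $1\le k\le N$. It therefore suffices to substitute $k=k_N$ and $h=h_N$ at each $N$.

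Concretely, I would first recall that \eqref{eq:local-entropy-main} comes from \eqref{eq:full-entropy-main} through exchangeability of $P_t^N$, since the initial law is i.i.d. and the dynamics are symmetric. The relevant fact is the standard subadditivity inequality for exchangeable laws against product references, $\Ent(P_t^{N,k}\mid p_t^{\otimes k})\le \frac{1}{\lfloor N/k\rfloor}\Ent(P_t^N\mid p_t^{\otimes N})\le \frac{2k}{N}\Ent(P_t^N\mid p_t^{\otimes N})$. This holds uniformly in $k\le N$, so it can be applied along a sequence $k_N$. Next I would take $h_N\asymp (N\log N)^{-1/(d+2)}$, which satisfies $h_N\le 1$ for $N$ large, and compute the individual terms:
\begin{itemize}
\item $h_N^2(1+|\log h_N|)\lesssim (N\log N)^{-2/(d+2)}\log N = N^{-2/(d+2)}(\log N)^{d/(d+2)}$;
\item $h_N^{-d}/N\asymp N^{-2/(d+2)}(\log N)^{d/(d+2)}$;
\item $\log N/N$, which is of lower order.
\end{itemize}
Taking the supremum over $t\le T$ then gives $\sup_{t\le T}\Ent(P_t^{N,k_N}\mid p_t^{\otimes k_N})\le C_T k_N N^{-2/(d+2)}(\log N)^{d/(d+2)}$, and this tends to zero under the stated hypothesis.

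As an optional final step, Pinsker's inequality gives $\|P_t^{N,k_N}-p_t^{\otimes k_N}\|_{\TV}\to0$ uniformly on $[0,T]$, which is what ``extending propagation of chaos to growing ensembles'' means quantitatively. The only genuine obstacle is the uniformity of $C_T$ in $k$. I would settle it by inspecting the statement of Theorem~\ref{thm:main-poc}: the full-system bound \eqref{eq:full-entropy-main} does not involve $k$ at all, and $k$ enters only through the exchangeability/subadditivity step above, whose constant is absolute. Everything else is arithmetic.
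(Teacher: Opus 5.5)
Your proposal is correct and matches the paper's argument: substitute $k=k_N$ and $h_N\asymp(N\log N)^{-1/(d+2)}$ into the $k$-uniform bound \eqref{eq:local-entropy-main} (equivalently Corollary~\ref{cor:optimized}), whose constant $C_T$ does not depend on $k$, and let the hypothesis drive the right-hand side to zero. The only difference is cosmetic: you pass from the full-system bound to $k$-marginals via block subadditivity, which gives a factor $2k/N$, whereas the paper uses the Shearer-type inequality with factor $k/N$; this changes only the constant.
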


\paragraph{Proof Strategy.}
If $(N-1)h^dr_m\ll1$, isolated particles register zero leave-one-out error. Collisions are rare and handled by a one-cell Poisson bound. For dense cells, the squared density fluctuation is $r_m/a$. Multiplication by the particle count scales the cell error cost to $r_m^2$, making it globally summable. In remote cells, the Gaussian envelope restricts spatial weight growth to $1+|x|^2\lesssim\log(1/r_m)$. The $k \log k$ rate function of the Poisson distribution overpowers and absorbs this logarithmic spatial weight. Using disjoint histograms is crucial: after Poissonization, cell counts become independent. The remaining non-summable terms are the Gaussian lattice sum (yielding $h^{-d}$) and the cost of conditioning a Poisson sum to equal $N$ (yielding $\log N$). Because we evaluate these bounds entirely under the independent product reference measure, neither term enters a Gronwall argument. Finally, the $t^{-1/2}$ singularity in the true density gradient resolves into a time-integrated logarithmic loss $1+|\log h|$. The proofs rely on the bias lemma deferred to Appendix~\ref{app:proofs}.

\begin{lemma}
\label{lem:weighted-bias}
Under Assumptions~\ref{ass:coefficients} and \ref{ass:density}, the cell averages inherit Gaussian decay $\Pi_{h,U}p_t(x) \le C e^{-c'|x|^2}$, and the deterministic bias 
\begin{equation}
\beta_h(t) := \sup_{x\in\R^d} |\nabla\Phi(x)|^2 |\Xi(\Rop p_t(x))-\Xi(p_t(x))|^2
\end{equation}
satisfies $\beta_h(t) \le C_T \min\{1,h^2/t\}$ uniformly across shifts.
\end{lemma}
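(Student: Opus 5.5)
We prove the two claims of Lemma~\ref{lem:weighted-bias} in turn: first the Gaussian envelope of the cell averages, then the pointwise bound on the weighted bias.

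\textbf{Envelope of the cell averages.} Fix $x\in B_m^{U}$. Every $y$ in the same cell satisfies $|y-x|\le\sqrt d\,h\le\sqrt d$. Hence $|y|\ge|x|-\sqrt d$, so $|y|^2\ge \tfrac12|x|^2-d$. Averaging the envelope in \eqref{eq:density-gradient-envelope} over the cell gives
\[
\Pi_{h,U}p_t(x)\le C_0e^{c_0d}e^{-c_0|x|^2/2}.
\]
This is the claim with $c'=c_0/2$, uniformly in $h\le1$, the shift $U$ and $t$.

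\textbf{Reduction to a density bias.} First, $R>C_0\ge p_t(x)$, so $p_t(x)=\cC(p_t(x))$. Since $\cC$ is $1$-Lipschitz,
\[
|\Rop p_t(x)-p_t(x)|\le \frac1L\sum_\ell|\Pi_{h,U_\ell}p_t(x)-p_t(x)|.
\]
By the Lipschitz bound on $\Xi$ and the linear growth of $\nabla\Phi$,
\[
|\nabla\Phi(x)|^2|\Xi(\Rop p_t)-\Xi(p_t)|^2\le C_\Phi L_\Xi^2(1+|x|^2)\max_\ell|\Pi_{h,U_\ell}p_t(x)-p_t(x)|^2 .
\]
Two bounds on the bracket are then needed, one for each branch of the minimum.

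\textbf{The branch $1$.} Boundedness $\kappa\le\Xi\le K$ alone gives a factor $(K-\kappa)^2$. This is not uniform in $x$, because of the factor $1+|x|^2$. Instead I use the Gaussian envelopes of both $p_t$ and $\Pi p_t$ from the first step: the difference is at most $Ce^{-c'|x|^2}$. Then $(1+|x|^2)e^{-2c'|x|^2}\le C$, so $\beta_h(t)\le C$.

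\textbf{The branch $h^2/t$.} By the mean value theorem along segments inside the cell,
\[
|\Pi p_t(x)-p_t(x)|\le \sqrt d\,h\sup_{|y-x|\le\sqrt d h}|\nabla p_t(y)|.
\]
The gradient envelope, with the same shift trick $|y|^2\ge\tfrac12|x|^2-d$ and $(1+|y|)^m\le C(1+|x|)^m$, yields
\[
|\Pi p_t(x)-p_t(x)|\le C h\max\{1,t^{-1/2}\}(1+|x|)^m e^{-a_1|x|^2/2}.
\]
Squaring and multiplying by $1+|x|^2$, the factor $(1+|x|)^{2m+2}e^{-a_1|x|^2}$ is bounded uniformly in $x$. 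This gives
\[
\beta_h(t)\le Ch^2\max\{1,t^{-1}\}\le C_T h^2/t,
\]
using $\max\{1,1/t\}\le\max\{1,T\}/t$ for $t\le T$.

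Combining the two branches gives $\beta_h(t)\le C_T\min\{1,h^2/t\}$. The constant is uniform in the shifts because both branches are pointwise in $x$ and use only $\sqrt d\,h\le\sqrt d$.

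The only delicate point is ensuring the polynomial weight $1+|x|^2$ is absorbed in both branches. This is why the zeroth-order branch must use the Gaussian envelope rather than mere boundedness of $\Xi$. The shift trick with constant $\sqrt d$, valid because $h\le1$, supplies exactly this absorption.
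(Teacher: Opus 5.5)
Your proposal is correct and follows essentially the same route as the paper. Both arguments obtain the Gaussian envelope of the cell averages from the cell diameter $\sqrt d\,h\le\sqrt d$. Both get the bounded branch from the envelopes of $p_t$ and $\Pi_{h,U}p_t$, which absorb the weight $1+|x|^2$, and the $h^2/t$ branch from the mean value theorem, the gradient envelope, and $\max\{1,t^{-1}\}\le(1\vee T)/t$. Two of your details differ slightly, both harmlessly: you track the shifted exponent $a_1/2$ when the gradient is evaluated at points of the cell, which the paper glosses over, and you use the $1$-Lipschitz property of $\cC$ where the paper argues that clipping is inactive.
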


\section{Verification for Ornstein--Uhlenbeck Models}

For the isotropic OU process $\dd Y_t=-\kappa_0Y_t\dd t+\sqrt2\,\dd W_t$, the invariant density is exactly Gaussian, satisfying Assumption~\ref{ass:density}. Setting $\Xi\equiv 1$ and $\Phi(x)=|x|^2/2$ verifies that our hypotheses cover standard linear OU reference models under stationary initialization (unless suitable Gaussian bounds are also imposed on a nonstationary initial density).
For the nonlinear model $\dd Y_t=-2Y_t\Xi(p_t(Y_t))\dd t+\sqrt2\,\dd W_t$ \cite{BelomestnyMorozova}, one has $\Phi(x)=1+|x|^2$, matching our at-most-linear growth assumption. Their pointwise PDE estimates imply $p_t(x)\le Ce^{-a|x|^2}$. 

\begin{corollary}
Under the hypotheses of \cite{BelomestnyMorozova}, in particular the stated assumptions on the initial density and on $\Xi$, equation (2.3) gives the small-time gradient estimate:
\begin{equation}
 |\nabla p_t(x)| \le \frac{C(1+|x|)}{\min\{1, \sqrt{t/2}\}} \exp\left\{-\frac{\Xi(0)}{4}|x|^2\right\}.
\end{equation}
This is of the form required by Assumption~\ref{ass:density}. The $t^{-1/2}$ gradient singularity is accommodated by splitting the initial layer at $t=h^2$; the time-integrated deterministic bias contributes only $h^2(1+|\log h|)$, and Theorem~\ref{thm:main-poc} applies directly from $t=0$ for the nonstationary process initialized at $p_0^{\otimes N}$.
\end{corollary}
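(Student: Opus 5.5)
The statement to prove is the last corollary: the gradient estimate for the density-dependent OU model, and the claim that Theorem~\ref{thm:main-poc} applies from $t=0$. The plan has two parts. First I show that the quoted bound from \cite{BelomestnyMorozova} fits the form of Assumption~\ref{ass:density}. Second I check that the $t^{-1/2}$ singularity costs only $h^2(1+|\log h|)$ in the time-integrated bias.

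\emph{Matching Assumption~\ref{ass:density}.} I take equation (2.3) of \cite{BelomestnyMorozova} as given, together with their Gaussian upper bound $p_t(x)\le Ce^{-a|x|^2}$. Since $\min\{1,\sqrt{t/2}\}^{-1}\le\sqrt2\max\{1,t^{-1/2}\}$, the gradient bound has the required form with $m=1$, $a_1=\Xi(0)/4>0$ (positive because $\Xi\ge\kappa$) and $C_1=\sqrt2 C$. The density envelope gives $C_0=C$ and $c_0=a$. The choice $\Phi(x)=1+|x|^2$ gives $|\nabla\Phi|^2=4|x|^2\le 4(1+|x|^2)$, so Assumption~\ref{ass:coefficients} holds with $C_\Phi=4$ once $\Xi$ is bounded, positive and Lipschitz, which is part of their hypotheses. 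Well-posedness and the PDE are also supplied by \cite{BelomestnyMorozova}. Finally, the clipping level $R$ is chosen larger than $C_0$.

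\emph{Time-integrated bias.} I use Lemma~\ref{lem:weighted-bias}, which gives $\beta_h(t)\le C_T\min\{1,h^2/t\}$, and split $[0,T]$ at $t=h^2$:
\[
\int_0^T\beta_h(t)\dd t\le C_T\Bigl(\int_0^{h^2}1\dd t+\int_{h^2}^T\frac{h^2}{t}\dd t\Bigr)=C_T\bigl(h^2+h^2\log(T/h^2)\bigr)\le C_T'h^2(1+|\log h|).
\]
This is exactly the bias term in \eqref{eq:drift-chaos-main}. The argument does not need $t$ bounded away from $0$. It only needs $p_0$ to satisfy the initial-density hypotheses of \cite{BelomestnyMorozova}, so the envelopes hold for every $0<t\le T$. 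The value at $t=0$ is irrelevant because Lemma~\ref{lem:entropy-production} integrates over time.

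\emph{Conclusion and main obstacle.} With both assumptions verified, Theorem~\ref{thm:main-poc} applies with initial law $p_0^{\otimes N}$, since $\Ent(P_0^N\mid p_0^{\otimes N})=0$. The main obstacle is external: confirming that (2.3) of \cite{BelomestnyMorozova} really holds uniformly on $(0,T]$, with $\Xi(0)$ in the exponent and no hidden dependence of the constant on $t$, on the nonstationary interval considered. I would verify this first, by checking their parametrix argument for the constants. If their Gaussian rate is only $\Xi(0)/4-\varepsilon$, I would simply take a slightly smaller $a_1$; Assumption~\ref{ass:density} only requires some $a_1>0$.
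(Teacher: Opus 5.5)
Your proposal is correct and follows essentially the paper's route. You match (2.3) to Assumption~\ref{ass:density} with $m=1$, $a_1=\Xi(0)/4$, $C_1=\sqrt2\,C$ (and $C_\Phi=4$ for $\Phi=1+|x|^2$), then integrate $\beta_h(t)\le C_T\min\{1,h^2/t\}$ from Lemma~\ref{lem:weighted-bias}, split at $t=h^2$, to get $h^2(1+|\log h|)$ exactly as in the proof of Theorem~\ref{thm:main-poc}; like the paper, you take the uniform validity of (2.3) on $(0,T]$ from \cite{BelomestnyMorozova} as an external input.
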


Furthermore, differentiating the reversible zero-flux stationary density characterized in \cite{BelomestnyMorozova} gives $\nabla\pi(x)=-\pi(x)\Xi(\pi(x))\nabla\Phi(x)$. Thus, the Gaussian envelope of the invariant density, the boundedness of $\Xi$, and the linear growth of $\nabla\Phi$ satisfy Assumption~\ref{ass:density}, ensuring that if the system is initialized at stationarity, the propagation-of-chaos bounds hold.

\section{Euler Particle Scheme and Scalability}
\label{sec:euler}

A KDE or pairwise evaluation algorithm requires evaluating $O(N^2)$ distances at every time step. By contrast, our shifted histogram grid formulation yields an algorithmic step requiring only local counts. For a fixed step $\Delta t>0$, clipping level $R$, and $L$ shifts, one step from $t_n$ to $t_{n+1}$ is:
\begin{enumerate}
 \item Hash the $N$ particles into every shifted grid to compute the occupied cell counts $n_{m,\ell}^n$.
 \item For each particle $i$, look up its corresponding cell counts and evaluate the scalar leave-one-out feature $\widehat r_{n,-i}^{h,L} =\frac1L\sum_{\ell=1}^L \cC( (n_{m(i,\ell),\ell}^n-1)/((N-1)h^d) )$.
 \item Execute the simultaneous Euler--Maruyama update: $X_{n+1}^i=X_n^i -\Delta t\,\Xi(\widehat r_{n,-i}^{h,L})\nabla\Phi(X_n^i) +\sqrt{2\Delta t}\,\xi_n^i$, with $\xi_n^i\sim\mathcal N(0,I_d)$.
\end{enumerate}
Computing cell indices and accumulating counts requires expected $O(1)$ operations per particle per grid, assuming expected constant-time hash access under standard hashing models. The expectation is with respect to the particle positions, which are random under the product law. The entire density-feature stage therefore costs expected $O(dLN)$ time. For fixed $d$ and $L$, this is expected $O(N)$ time, successfully bypassing $O(N^2)$ distance matrices.

\begin{remark}
\label{rem:euler-error}
The current paper establishes the continuous-time mean-field limit. The discretization error analysis for the unbounded-force case is left for future work; for the bounded-force case, the work of Jourdain and Menozzi \cite{JourdainMenozzi2024} provides a route. Assuming the bounded-force condition $\|\nabla\Phi\|_\infty\le M_\Phi<\infty$ (which models an at-most-linear potential globally on $\R^d$, not a compact domain), the full particle drift is a bounded measurable function. Under these conditions, randomized-time Euler schemes yield a pointwise transition density error bounded by $C\Delta^{1/2}(1+\log(T/\Delta))$ multiplied by a normalized Gaussian. Integrating this spatially yields an $O(\Delta^{1/2}(1+\log(T/\Delta)))$ total-variation bound for fixed $N$. However, this analytical approach relies on time-randomization, claims no polynomial dependence of the constant on $N$, and does not cover the unbounded Ornstein--Uhlenbeck force required by strongly confining sampling regimes.
\end{remark}

\section{Conclusion}

The clipped shifted-histogram formulation resolves the missing product-law bounds for interacting diffusions. By combining independent Poissonization under the uncoupled reference measure with integral entropy transfer, we establish a framework for local density estimation in McKean--Vlasov SDEs. Under Gaussian-envelope PDE conditions, the interacting particle error is polynomial, achieving balanced marginal entropy rates of $O(N^{-2/(d+2)}(\log N)^{d/(d+2)})$ and total-variation rates of $O(N^{-1/(d+2)}(\log N)^{d/[2(d+2)]})$. While this establishes finite-time propagation of chaos, the discretization error analysis for the unbounded-force case, algorithmic convergence to a prescribed target distribution, long-time mixing or ergodicity of the particle system, and uniform-in-time particle accuracy remain open.

\appendix

\section{Weighted Exponential Occupancy under the Product Law}
\label{sec:weighted-occupancy}

At a fixed time, let $Y_1,\ldots,Y_N$ be i.i.d.\ draws from a target density $p$. For a cell $B_m^U$, define its local probability mass $P_m^U=\int_{B_m^U}p(x)\dd x$, nominal density $r_m^U=P_m^U/v$, and spatial weight $w_m^U=1+\sup_{x\in B_m^U}|x|^2$. Define the exact one-grid energy pathwise as 
\begin{equation}
    S_N = \sum_m n_m w_m \left| \cC\left(\frac{(n_m-1)_+}{(N-1)v}\right) - \cC(r_m) \right|^2.
\end{equation}

\begin{theorem}
\label{thm:weighted-mgf}
Suppose the density admits the global Gaussian envelope $p(x)\le C_0e^{-c_0|x|^2}$. If $L_\Xi = 0$, the conclusion is trivial. Otherwise, there exist $\alpha_*,C_*\in(0,\infty)$, depending only on $d,C_0,c_0,R,L_\Xi$, such that 
\begin{equation}
\log\E\exp\left\{ \alpha_*\sum_{i=1}^N(1+|Y_i|^2) \left|\Xi(\hRop(Y_i;Y))-\Xi(\Rop p(Y_i))\right|^2 \right\} \le C_*\bigl(h^{-d}+\log N\bigr).
\end{equation}
The constants are uniform in $N\ge2$, bandwidth $0<h\le1$, shifts $U$, and across any family of densities satisfying identical envelope bounds.
\end{theorem}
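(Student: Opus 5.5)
We reduce the multi-grid statement to the one-grid energy $S_N$ and then bound $\log\E e^{\alpha S_N}$ by Poissonizing, estimating cell by cell, and de-Poissonizing.

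\emph{Reduction to one grid.} By the Lipschitz property of $\Xi$ and convexity of the square (Jensen over the $L$ shifts),
\[
|\Xi(\hRop(Y_i;Y))-\Xi(\Rop p(Y_i))|^2\le \frac{L_\Xi^2}{L}\sum_{\ell=1}^L \Bigl|\cC\Bigl(\tfrac{n_{m(i,\ell),\ell}-1}{(N-1)v}\Bigr)-\cC(r^{U_\ell}_{m(i,\ell)})\Bigr|^2 .
\]
Since $\cC$ is constant on each cell, $\Rop p(Y_i)$ equals the average of $\cC(r_m^{U_\ell})$. Group the particles by cell and bound $1+|Y_i|^2\le w_m$. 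The exponent is then at most $\alpha L_\Xi^2 L^{-1}\sum_\ell S_N^{(\ell)}$. Hölder's inequality with exponents $L$ (the generalized Hölder/convexity of $\log\E e^{(\cdot)}$) gives
\[
\log\E e^{\alpha L_\Xi^2 L^{-1}\sum_\ell S^{(\ell)}}\le \max_\ell \log \E e^{\alpha L_\Xi^2 S^{(\ell)}},
\]
so it suffices to show $\log\E e^{\alpha S_N}\le C(h^{-d}+\log N)$ for one fixed grid and a small $\alpha$.

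\emph{Poissonization and de-Poissonization.} Let $M\sim\mathrm{Poi}(N-1)$ be independent of the sample, and let $\tilde n_m$ be the counts of $M+1$ points, i.e.\ one point plus a Poisson cloud. It is cleanest to use $N$ points with intensity $N-1$: in that case $\tilde n_m-1$ behaves as $\mathrm{Poi}((N-1)P_m)$ after a size bias. Given $M+1=N$, the law is exactly the $N$-sample. Because $P(M=N-1)\gtrsim N^{-1/2}$, nonnegativity of the exponential gives
\[
\E e^{\alpha S_N}\le C\sqrt N\,\E e^{\alpha \tilde S},
\]
which accounts for the $\log N$ term. Under the Poisson law the counts $n_m\sim\mathrm{Poi}(\lambda_m)$, $\lambda_m=(N-1)vr_m$, are independent, so $\log\E e^{\alpha\tilde S}=\sum_m\log\E e^{\alpha f_m(n_m)}$ with
\[
f_m(n)=n w_m|\cC((n-1)_+/(N-1)v)-\cC(r_m)|^2.
\]
The extra tagged point is handled by the size-bias identity $\E[n g(n)]=\lambda\E g(n+1)$, applied on the log-MGF level via the leave-one-out structure.

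\emph{One-cell estimate (the main obstacle).} We need $\log\E e^{\alpha f_m(n_m)}\le C(\lambda_m\wedge 1+ r_m^2 v\cdot\text{stuff})$ in a form summable to $C(h^{-d}+1)$. There are three regimes.
\begin{itemize}
\item Cells with $r_m\le\delta$ and $w_m$ large: Gaussian decay gives $w_m\lesssim 1+\log(C_0/r_m)$. The term $f_m$ is at most $nR^2w_m$, and $f_m=nw_m r_m^2$ when $n=1$, which is tiny. For $n\ge2$ the Poisson tail $\lambda^n/n!$ beats $e^{\alpha R^2 n w_m}$, provided $\alpha$ is small enough that $e^{\alpha R^2 w_m}\lambda_m\le \lambda_m^{1/2}$ (use $\lambda_m\le Nv r_m$ together with $w_m\lesssim\log(1/r_m)$). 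This is where the $k\log k$ rate of the Poisson law absorbs the logarithmic weight, and it yields a contribution $\lesssim\lambda_m^2 e^{C\alpha w_m}+\lambda_m w_m r_m^2$.
\item Dense cells: here $\cC$ is $1$-Lipschitz and bounded by $R$. Bernstein/Poisson concentration bounds the MGF of $n(n/\lambda-1)^2 r_m^2 w_m$ by $e^{C\alpha w_m r_m}$, exactly the $r_m/a$ fluctuation times the count.
\item Summation: after multiplying by $w_m$, the contributions $\sum_m (1\wedge\lambda_m)$-type terms reduce to either $\sum_m w_m r_m^{2}\cdot$const or $\sum_m \mathbf 1\{\text{cell near the bulk}\}$. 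The Gaussian lattice sums satisfy $\sum_m w_m e^{-c|m h|^2}\lesssim h^{-d}$.
\end{itemize}
The delicate points are the uniform choice of $\alpha_*$ in the first regime and the boundary between regimes. I would split at $\lambda_m=1$ and at $r_m=e^{-c w_m}$ thresholds, using $P_m\le C_0 v e^{-c_0 \inf_{B_m}|x|^2}$, which holds since $h\le1$.
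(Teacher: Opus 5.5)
Your one-grid reduction (Lipschitz bound on $\Xi$, Jensen over the shifts, H\"older), the Poissonization at total intensity $N-1$, the $\sqrt N$ de-Poissonization cost and the final Gaussian lattice sum all match the paper's proof. The gap is in the one-cell estimate, which you flag as the main obstacle but do not close. Your regimes implicitly treat ``$r_m$ small'' as if it meant ``$\lambda_m$ small''. But $\lambda_m=(N-1)h^d r_m$, and $Nh^d$ is unbounded; it tends to infinity at the optimal bandwidth. So there is a band of tail cells where $r_m\ll1$, hence $w_m\asymp\log(1/r_m)$ is large, and yet $\lambda_m\ge1$.

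In that band your first-regime requirement $e^{\alpha R^2 w_m}\lambda_m\le\lambda_m^{1/2}$ fails. Your second-regime Bernstein argument cannot absorb the weight either. Once clipping is active, $f_m(n)\approx n w_m R^2$, and a Bernstein tail of order $e^{-cn}$ loses to $e^{\alpha n w_m R^2}$ as $w_m\to\infty$. Without clipping it is worse: the MGF of $n(n/\lambda-1)^2 r_m^2 w_m$ is infinite, since that quantity is cubic in $n$.

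Your per-cell target $C(\lambda_m\wedge1+\cdots)$ is also not summable as claimed. For a Gaussian $p$, $\sum_m(\lambda_m\wedge1)$ is of the order of the number of cells with $Nh^d r_m\ge1$, namely $h^{-d}(\log(Nh^d))^{d/2}$. This is not $O(h^{-d}+\log N)$ uniformly, for instance along $h\asymp(N\log N)^{-1/(d+2)}$. Similarly, $\lambda_m^2 e^{C\alpha w_m}$ summed over the cells with $\lambda_m\le1$ picks up a positive power of $Nh^d$.

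What is missing is a per-cell bound $\log\E e^{\alpha_0 f_m(K_m)}\le C r_m^\rho$ that is uniform in the intensity $a=(N-1)h^d$. The paper obtains it by comparing $Z(k)=kw|\cC((k-1)_+/a)-\cC(r)|^2$ deterministically with the Poisson rate function $I_\lambda(k)=k\log(k/\lambda)-k+\lambda$:
\begin{itemize}
\item In all cases, $Z\le C(I_\lambda+1)$.
\item On the near set $\{(k-1)/a\le\sqrt r\}$, $Z\le Cwr(I_\lambda+1)$, with $wr\lesssim r^{1/2}$.
\item On the far set, $k/\lambda>r^{-1/2}$, so $I_\lambda(k)\ge ck\log(k/\lambda)\ge c'(1+\log(A/r))$. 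This single fact both dominates $Z$ (here the $k\log k$ rate absorbs $w\lesssim\log(A/r)$) and makes the event rare.
\end{itemize}
Combined with $\sup_{\lambda>0}\E e^{\theta I_\lambda(K_\lambda)}<\infty$ for $\theta<1$, this gives $\E e^{\alpha_0 Z}\le 1+Cr^{1/2}+Cr^{\rho_1}$ for small $r$, with no dependence on $a$. Then $\sum_m r_m^\rho\le C_\rho h^{-d}$ closes the argument.

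Separately, drop the tagged-point and size-bias device. The counts of one fixed point plus a Poisson cloud are not independent across cells. The identity $\E[ng(n)]=\lambda\E g(n+1)$ also says nothing about exponential moments. Instead, take independent $K_m\sim\mathrm{Poi}((N-1)P_m)$ and condition on $\sum_m K_m=N$, an event of probability $\asymp N^{-1/2}$. The leave-one-out $-1$ is then just part of the deterministic function $f_m$.
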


We define the Poisson rate function $I_\lambda(k)=k\log(k/\lambda)-k+\lambda$.

\begin{lemma}
\label{lem:poisson-information}
If $K_\lambda\sim\operatorname{Poi}(\lambda)$, then for $0<\theta<1$, $\sup_{\lambda>0}\E e^{\theta I_\lambda(K_\lambda)}<\infty$. Furthermore, if $0<\theta<\theta'<1$, then $\sup_{\lambda>0} \E\bigl[(1+I_\lambda(K_\lambda)) e^{\theta I_\lambda(K_\lambda)}\bigr]<\infty$.
\end{lemma}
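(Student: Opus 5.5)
We prove the statement about the Poisson rate function, Lemma~\ref{lem:poisson-information}, by a direct computation with the Poisson probabilities, using Stirling's formula to see that $e^{I_\lambda(k)}$ nearly cancels the Poisson weight. Write $\pi_\lambda(k)=e^{-\lambda}\lambda^k/k!$. Since $e^{I_\lambda(k)}=(k/\lambda)^k e^{-k+\lambda}$, we get
\begin{equation}
\pi_\lambda(k)e^{I_\lambda(k)}=\frac{k^ke^{-k}}{k!}\le \frac{1}{\sqrt{2\pi k}}\quad(k\ge1),
\end{equation}
with value $1$ at $k=0$; the inequality is Stirling's lower bound $k!\ge\sqrt{2\pi k}\,k^ke^{-k}$. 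Hence
\begin{equation}
\E e^{\theta I_\lambda(K_\lambda)}=\sum_{k\ge0}\pi_\lambda(k)^{1-\theta}\bigl(\pi_\lambda(k)e^{I_\lambda(k)}\bigr)^{\theta}\le \sum_{k\ge0}\pi_\lambda(k)^{1-\theta}(1+k)^{-\theta/2}.
\end{equation}
It therefore suffices to bound $\sum_k\pi_\lambda(k)^{1-\theta}$ uniformly in $\lambda$, which is a Rényi-type sum. Since $1-\theta<1$, this sum is not automatically bounded, and controlling it is the main step.

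For the uniform bound we again use the cancellation, now with the complementary exponent. Write $\pi_\lambda(k)^{1-\theta}=\pi_\lambda(k)e^{\theta I_\lambda(k)}\cdot\bigl(\pi_\lambda(k)e^{I_\lambda(k)}\bigr)^{-\theta}$, which is circular, so instead split into a bulk region and tails.
\begin{itemize}
\item \emph{Bulk.} Take $|k-\lambda|\le A\sqrt{\lambda\vee1}$. The local CLT (or Stirling) gives $\pi_\lambda(k)\le C(\lambda\vee1)^{-1/2}$, and there are $O(\sqrt{\lambda\vee1})$ terms. Including the factor $(1+k)^{-\theta/2}\asymp\lambda^{-\theta/2}$, the bulk contribution is $\lesssim \lambda^{1/2}\lambda^{-(1-\theta)/2}\lambda^{-\theta/2}=O(1)$.
\item \emph{Tails.} Use $I_\lambda(k)\ge (k-\lambda)^2/(2(k\vee\lambda))$, which gives $I_\lambda(k)\gtrsim$ either $x^2$ for $k=\lambda+x\sqrt\lambda$ with $|x|\le\sqrt\lambda$, or $k\log(k/\lambda)$ for large $k$. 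By the Stirling identity, $\pi_\lambda(k)=\frac{k^ke^{-k}}{k!}e^{-I_\lambda(k)}\le e^{-I_\lambda(k)}(1+k)^{-1/2}$. Combined with the first display, each term is therefore bounded by $(1+k)^{-1/2}e^{-(1-\theta)I_\lambda(k)}$. Summing over $k$ with the Gaussian lower bound on $I_\lambda$ gives
\[
\textstyle\sum_k (1+k)^{-1/2}e^{-c(1-\theta)(k-\lambda)^2/(k\vee\lambda)}\lesssim (\lambda\vee1)^{-1/2}\cdot\sqrt{\lambda\vee1}/\sqrt{1-\theta}.
\]
This is bounded uniformly in $\lambda$; for $k\gg\lambda$ the $k\log(k/\lambda)$ growth gives geometric decay.
\end{itemize}
For $\lambda\le1$ everything is simpler: $I_\lambda(k)\ge k\log k - k$, and the sum is dominated by $\sum_k (k^ke^{-k}/k!)^{\theta}\lambda^{k(1-\theta)}$-type terms, which are bounded for $\lambda\le 1$.

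The second claim follows from the first. For $0<\theta<\theta'<1$ we have $1+I\le C_{\theta,\theta'}e^{(\theta'-\theta)I}$, so
\[
\E\bigl[(1+I_\lambda(K_\lambda))e^{\theta I_\lambda(K_\lambda)}\bigr]\le C\,\E e^{\theta' I_\lambda(K_\lambda)},
\]
and the right-hand side is bounded uniformly in $\lambda$ by the first claim applied with $\theta'$.

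The only delicate point is making the bulk/tail split uniform across the transition $\lambda\approx1$ and keeping the correct powers of $\lambda$ in the bulk. I expect the cleaner route is the single bound $\pi_\lambda(k)\le (1+k)^{-1/2}e^{-I_\lambda(k)}$ together with the lower bound $I_\lambda(\lambda+u)\ge u^2/(2(\lambda+u))$, which reduces everything to an elementary sum estimate.
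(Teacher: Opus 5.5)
Your proof is correct and follows the paper's route: the Stirling cancellation $\pi_\lambda(k)e^{I_\lambda(k)}=k^ke^{-k}/k!\le(2\pi k)^{-1/2}$ gives the termwise bound $(1+k)^{-1/2}e^{-(1-\theta)I_\lambda(k)}$, which you sum uniformly in $\lambda$ using a quadratic lower bound on $I_\lambda$ near $\lambda$ and linear growth far from it, and the second claim follows from $(1+x)e^{\theta x}\le C e^{\theta' x}$. Two slips are harmless but should be fixed: $\sum_k\pi_\lambda(k)^{1-\theta}$ actually grows like $\lambda^{\theta/2}$, so the ``it suffices'' reduction works only if you keep the factor $(1+k)^{-\theta/2}$ (which you then do), and your closing bound $I_\lambda(\lambda+u)\ge u^2/(2(\lambda+u))$ is false for $u<0$ (take $k=0$), whereas the version with $\max\{k,\lambda\}$ in the denominator, used in your main argument, is correct.
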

\begin{proof}
For $k=0$, $I_\lambda(0) = \lambda$, and $\mathbb{P}(K_\lambda=0)e^{\theta I_\lambda(0)} = e^{-(1-\theta)\lambda} \le 1$. By Stirling's bound for $k\ge1$, the Poisson probability satisfies $\mathbb P(K_\lambda=k)e^{\theta I_\lambda(k)} \le C k^{-1/2} e^{-(1-\theta)I_\lambda(k)}$. For $0 < \lambda \le 2$, we use the uniform inequality $I_\lambda(k) \ge k\log(k/2) - k$. The sequence $k^{-1/2} e^{-(1-\theta)(k\log(k/2) - k)}$ decays rapidly, and the finitely many small $k$ can be bounded individually. Thus the sum converges uniformly for $\lambda \in (0,2]$. 
For $\lambda > 2$, observing the convexity of $u\log u-u+1$, we partition the integer summation into three regimes. For $k<\lambda/2$, $I_\lambda(k)\ge c\lambda$; the exponential factor $e^{-c(1-\theta)\lambda}$ absorbs the $O(\sqrt{\lambda})$ summation over $k^{-1/2}$. For $\lambda/2\le k\le2\lambda$, $I_\lambda(k)\ge c(k-\lambda)^2/\lambda$; the discrete Gaussian summation has size $O(\sqrt{\lambda})$ which offsets the $k^{-1/2} \asymp \lambda^{-1/2}$ penalty. For $k>2\lambda$, $I_\lambda(k)\ge ck\log(k/\lambda)$. Because $k/\lambda > 2$, we have $I_\lambda(k) \ge c k \log 2$. The sum $\sum_{k>2\lambda} k^{-1/2} e^{-(1-\theta)ck\log 2}$ is bounded by a convergent geometric series independent of $\lambda$. Applying these bounds guarantees the expectation is bounded uniformly across all $\lambda>0$. Applying the elementary inequality $(1+x)e^{\theta x}\le C_{\theta,\theta'}e^{\theta'x}$ yields the second claim.
\end{proof}

\begin{lemma}
\label{lem:deterministic-cell}
Fix $A,B,R<\infty$. Let $a>0$, $0<r\le A$, and define $\log_+(x) = \max\{0, \log x\}$. Assume $w\le B\bigl(1+\log_+(A/r)\bigr)$. Set $\lambda=ar$ and define $x_k=(k-1)_+/a$, $Z(k)=kw|\cC(x_k)-\cC(r)|^2$, and $D(k)=I_\lambda(k)$. There exist constants $C<\infty$, $r_0\in(0,1)$ and $c>0$ ensuring:
\begin{itemize}
 \item $Z(k)\le C(D(k)+1)$ for all $k$.
 \item If $r\le r_0$ and $x_k\le\sqrt r$, then $Z(k)\le Cwr(D(k)+1)$.
 \item If $r\le r_0$ and $x_k>\sqrt r$, then $D(k)\ge c\bigl(1+\log(A/r)\bigr)$ and $Z(k)\le CD(k)$.
\end{itemize}
\end{lemma}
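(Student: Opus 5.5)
The plan is to write $D(k)=\lambda\varphi(k/\lambda)$ with $\varphi(u)=u\log u-u+1$, and to use three elementary lower bounds.
\begin{itemize}
\item $\varphi(u)\ge c_1(u-1)^2$ for $0\le u\le 2$, so $D(k)\ge c_1(k-\lambda)^2/\lambda$ when $k\le 2\lambda$.
\item $\varphi(u)\ge c_2u\log u$ for $u\ge2$, so $D(k)\ge c_2k\log(k/\lambda)\ge c_2(\log 2)\,k$ when $k\ge 2\lambda$.
\item $\varphi(u)\ge \tfrac12u\log u$ once $u$ is large enough.
\end{itemize}
On the $Z$ side I use $|\cC(x)-\cC(r)|\le\min\{R,|x-r|\}$. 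The constant $r_0$ is chosen depending only on $A,R$, with $r_0\le\min\{R^2,1/4\}$ small, so that $\sqrt r\le R$ and $r\le R$ on $\{r\le r_0\}$; clipping is then inactive whenever $x_k\le\sqrt r$. All constants will depend only on $A,B,R$, and never on $a$. This uniformity is the point to watch: every factor $1/a$ has to be converted into a power of $r$ using $k\ge1$ or $\lambda\gtrsim1$.

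\emph{Second bullet} ($r\le r_0$, $x_k\le\sqrt r$). Three cases.
\begin{itemize}
\item $k=0$: then $Z=0$.
\item $k=1$: then $x_1=0$ and $Z=wr^2\le wr$.
\item $k\ge2$: here $x_k\ge k/(2a)$. If $k\ge2\lambda$, I bound $(x_k-r)^2\le x_k^2\le r$. This gives $Z\le wrk\le Cwr\,D(k)$ by the linear lower bound on $D$. If $2\le k<2\lambda$, then $1\le k/2<\lambda$ gives $1/a<r$. I split $(x_k-r)^2\le 2(k-\lambda)^2/a^2+2/a^2$. Multiplying by $k<2\lambda=2ar$ gives the two terms $4r^2(k-\lambda)^2/\lambda\le Cr^2D(k)$ and $4r/a\le4r^2$. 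Hence $Z\le Cwr^2(D(k)+1)\le Cwr(D(k)+1)$.
\end{itemize}

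\emph{Third bullet} ($r\le r_0$, $x_k>\sqrt r$). Here $k-1>a\sqrt r=\lambda/\sqrt r$, so $k\ge2$ and $k/\lambda>r^{-1/2}\ge r_0^{-1/2}$. Shrinking $r_0$ so that $\varphi(u)\ge\frac12u\log u$ on this range gives $D(k)\ge \frac k4\log(1/r)$. Shrinking $r_0$ further (depending on $A$) ensures $\log(1/r)\ge c(1+\log(A/r))$ for $r\le r_0$. Together these give $D(k)\ge c\,k(1+\log(A/r))\ge c(1+\log(A/r))$. For the other half, clipping gives $Z\le kwR^2\le R^2Bk(1+\log_+(A/r))$. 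Since $r\le A$, $\log_+(A/r)=\log(A/r)$, and therefore $Z\le C D(k)$.

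\emph{First bullet.} For $r\le r_0$ it follows from the two bullets above, since $wr\le Br(1+\log(A/r))$ is bounded on $(0,A]$. For $r_0<r\le A$, the weight is bounded, $w\le B(1+\log(A/r_0))$, and I repeat the dichotomy.
\begin{itemize}
\item $k\ge2\lambda$: $Z\le wR^2k\le CD(k)$.
\item $1\le k<2\lambda$: $1/a<2r$. The same splitting as before gives $k(x_k-r)^2\le Cr^2D(k)+4r/a\le C(D(k)+1)$, using $r\le A$.
\end{itemize}

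The main obstacle is exactly this $a$-uniformity in the regime $k\lesssim\lambda$. The $+1/a$ shift from the leave-one-out $(k-1)_+$ has to be absorbed, and that works only because $k\ge1$ forces $\lambda>1/2$. The large-deviation bullet is easy once $r_0$ is taken small relative to $A$ and $R$.
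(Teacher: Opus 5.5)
Your proof is correct and follows essentially the same route as the paper. The paper uses the same two regimes. In the Gaussian regime $k\le 2\lambda$, $k\ge 2$ forces $\lambda\ge 1$, so the leave-one-out shift $1/a\le r$ is absorbed by the quadratic lower bound on $I_\lambda$. In the large-deviation regime $k>2\lambda$, $D(k)\gtrsim k\log(k/\lambda)$, and $x_k>\sqrt r$ forces $k/\lambda>r^{-1/2}$, with $r_0$ chosen small in terms of $A$ and $R$. Your version is slightly more explicit than the paper's, notably in treating the first bullet for $r_0<r\le A$ separately and in checking that no constant depends on $a$.
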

\begin{proof}
For $k\le 1$, $x_k=0$ yielding $Z(1)\le wr^2$, which is bounded by a constant $C$ because $r^2(1+\log_+(A/r))$ remains bounded on $(0,A]$. For $k\ge 2$, if $k\le2\lambda$, then $\lambda\ge 1$. The squared error satisfies $k|x_k-r|^2 \le 2kr^2\lambda^{-2}((k-\lambda)^2+1) \le Cr^2(D(k)+1)$. Multiplication by $w$ yields the global bound and near-mean bounds since $wr^2 \le C$ and $r^2 \le Ar$. If $k>2\lambda$, let $u=k/\lambda>2$. Because the clipping function $\cC$ is bounded by $R$ and is 1-Lipschitz, the error ratio is bounded by $Z(k)/D(k) \le Cw \min\{r^2u^2,R^2\} / \log u$. If $x_k \le \sqrt{r}$, then $ur = k/a \le 2\sqrt{r}$, making the un-logged numerator bounded by $Cwr$, confirming the near estimate. If $x_k > \sqrt{r}$, the ratio $u = k/\lambda \ge x_k/r > r^{-1/2}$. Shrinking $r_0$ ensures this is larger than 2, yielding $D(k) \ge c k \log u$. Since $u > r^{-1/2}$, we have $\log u \ge \tfrac12\log(1/r)$, and thus $1+\log_+(A/r) \le C\log u$. Consequently,
\begin{equation}
    \frac{Z(k)}{D(k)} \le C\frac{w\min\{r^2u^2,R^2\}}{\log u} \le C,
\end{equation}
and $D(k) \ge c k \log u \ge c\bigl(1+\log(A/r)\bigr)$. This explicitly establishes the far estimate.
\end{proof}

\begin{lemma}
\label{lem:one-cell}
Under the assumptions of Lemma~\ref{lem:deterministic-cell}, there exist $\alpha_0,C<\infty$ and $\rho>0$ such that for $K\sim\operatorname{Poi}(ar)$, $\log\E\exp\{ \alpha_0Kw |\cC((K-1)_+/a)-\cC(r)|^2 \} \le Cr^\rho$.
\end{lemma}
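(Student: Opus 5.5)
The plan is to bound $\E e^{\alpha_0 Z(K)}=\sum_k \mathbb P(K=k)e^{\alpha_0 Z(k)}$, with $Z(k)=kw|\cC(x_k)-\cC(r)|^2$ and $x_k=(k-1)_+/a$ as in Lemma~\ref{lem:deterministic-cell}. Writing $\lambda=ar$ and $D(k)=I_\lambda(k)$, the target is
\begin{equation}
\E\bigl[e^{\alpha_0 Z(K)}-1\bigr]\le Cr^\rho .
\end{equation}
Since $\log(1+x)\le x$, this gives the claim. I would split according to whether $r>r_0$ or $r\le r_0$, where $r_0$ is the threshold from Lemma~\ref{lem:deterministic-cell}.

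\emph{Case $r>r_0$.} Here $r^\rho\ge r_0^\rho$, so it suffices to show that $\E e^{\alpha_0 Z(K)}$ is bounded by a constant. The first bullet of Lemma~\ref{lem:deterministic-cell} gives $\alpha_0 Z(k)\le \alpha_0 C(D(k)+1)$. Choosing $\alpha_0 C\le\theta<1$, Lemma~\ref{lem:poisson-information} yields $\E e^{\theta I_\lambda(K)}\le C'$, and this holds uniformly in $\lambda$.

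\emph{Case $r\le r_0$, near part.} I would split the sum into near indices $\{x_k\le\sqrt r\}$ and far indices $\{x_k>\sqrt r\}$. On the near set, the second bullet gives $\alpha_0 Z(k)\le \alpha_0 Cwr(D(k)+1)$. Since $w\le B(1+\log_+(A/r))$, we have $wr\le C r^{1/2}$, and in particular $\alpha_0 Cwr\le\theta$ once $r_0$ is small. Using $e^{y}-1\le y e^{y}$, the near contribution is at most
\begin{equation}
Cwr\,\E\bigl[(1+D(K))e^{\theta D(K)}\bigr]\le C r^{1/2}.
\end{equation}
The last step is the second part of Lemma~\ref{lem:poisson-information}.

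\emph{Case $r\le r_0$, far part.} The third bullet gives $Z(k)\le CD(k)$ and $D(k)\ge c(1+\log(A/r))$. Take $\alpha_0 C\le\theta<\theta'<1$. Then
\begin{equation}
\sum_{\text{far}}\mathbb P(K=k)e^{\alpha_0 Z(k)}\le \sum_{\text{far}}\mathbb P(K=k)e^{\theta' D(k)}e^{-(\theta'-\theta)D(k)}\le C\,(r/A)^{c(\theta'-\theta)}.
\end{equation}
The final bound uses $\E e^{\theta' D(K)}\le C$. The ``$-1$'' on this set is harmless because the quantity is nonnegative and we are bounding from above. Together these give $\rho=\min\{1/2,\,c(\theta'-\theta)\}$.

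\emph{Main obstacle.} The delicate point is that the bound must be $O(r^\rho)$ rather than $O(1)$. This needs two things. First, the near estimate must carry the factor $wr$ and not merely a constant. Second, on the far set we must extract a factor $e^{-\epsilon D}$ with $D\gtrsim\log(1/r)$. The Poisson-information lemma is uniform in $\lambda$, which is what makes both of these work.

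I would also check the index $k=0$, which is in the near set. There $Z(0)=0$, so it contributes nothing to $e^{\alpha_0 Z}-1$.
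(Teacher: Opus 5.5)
Your proposal is correct and follows the paper's proof essentially step by step. Both arguments split at $r_0$, bound the $r>r_0$ case by a constant, and use $e^y-1\le ye^y$ with the second part of Lemma~\ref{lem:poisson-information} to get $O(r^{1/2})$ on the near set $\{x_K\le\sqrt r\}$. On the far set, both extract $e^{-(\theta'-\theta)D}$ with $D\ge c(1+\log(A/r))$, which gives $\rho=\min\{1/2,\,c(\theta'-\theta)\}$.
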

\begin{proof}
Let $D=D(K)$ and $Z=Z(K)$, and fix $0<\theta<\theta'<1$. If $r \ge r_0$, setting a sufficiently small $\alpha_0$ combined with Lemma~\ref{lem:deterministic-cell} yields $\log\E e^{\alpha_0 Z} \le C \le C r_0^{-\rho}r^\rho \le C' r^\rho$. For $r < r_0$, we partition the expectation over $\mathcal A=\{x_K\le\sqrt r\}$ and its complement. On the near-set, $Z\le \varepsilon_r(D+1)$ with $\varepsilon_r = Cwr \le C r^{1/2}$. Applying $e^x - 1 \le x e^x$ yields the variance contraction $\E[(e^{\alpha_0 Z}-1)\1_{\mathcal A}] \le \alpha_0 \varepsilon_r \E[(D+1) e^{\alpha_0 \varepsilon_r (D+1)}]$. By decreasing $\alpha_0$ such that $\alpha_0 \varepsilon_r < \theta$, Lemma~\ref{lem:poisson-information} bounds the expectation, yielding $\E[(e^{\alpha_0 Z}-1)\1_{\mathcal A}] \le C r^{1/2}$. On the far-set $\mathcal A^c$, the far-estimate guarantees $\alpha_0 Z \le \theta D$ and $D\ge c(1+\log(A/r))$. The large deviations absorb the spatial weight; applying the exponential Markov inequality (since $\theta < \theta'$) yields $\E[(e^{\alpha_0 Z}-1)\1_{\mathcal A^c}] \le \E[e^{\theta D}\1_{\{D\ge c(1+\log(A/r))\}}] \le e^{-(\theta'-\theta)c(1+\log(A/r))} \E e^{\theta' D} \le Cr^{\rho_1}$. Summing these components and bounding $\log(1+x)\le x$ completes the proof with $\rho = \min\{1/2, \rho_1\}$.
\end{proof}

\begin{lemma}
\label{lem:gaussian-cells}
Under the Gaussian envelope $p(x) \le C_0 e^{-c_0|x|^2}$, there are $A_0,c_1>0$ such that uniformly in $0<h\le1$, the shifts, and the cells, $r_m^U\le A_0e^{-c_1w_m^U}$. Consequently, $w_m^U\le c_1^{-1}\log(A_0/r_m^U)$ for $r_m^U>0$, and the lattice sum satisfies $\sum_m(r_m^U)^\rho\le C_\rho h^{-d}$ for any $\rho>0$.
\end{lemma}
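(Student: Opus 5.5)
The plan is to handle the three claims of Lemma~\ref{lem:gaussian-cells} in order: a pointwise cell bound, then the inverted logarithmic form, then the lattice sum.

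\textbf{Pointwise cell bound.} Fix a cell $B_m^U$ and let $x_m^*$ be a point of its closure closest to the origin. Since the cell has diameter $\sqrt d\,h\le\sqrt d$, every $x\in B_m^U$ satisfies $|x|\ge |x_m^*|$, and
\begin{equation}
 \sup_{y\in B_m^U}|y| \le |x_m^*|+\sqrt d .
\end{equation}
The Gaussian envelope then gives
\begin{equation}
 r_m^U=\frac1v\int_{B_m^U}p\le C_0e^{-c_0|x_m^*|^2}.
\end{equation}
Next I compare this with $w_m^U=1+\sup_{B_m^U}|y|^2$. Using $(s+\sqrt d)^2\le 2s^2+2d$, we get $w_m^U\le 1+2d+2|x_m^*|^2$. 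Hence
\begin{equation}
 e^{-c_0|x_m^*|^2}\le e^{c_0(1+2d)/2}\,e^{-(c_0/2)w_m^U}.
\end{equation}
This yields the first claim with $c_1=c_0/2$ and $A_0=C_0e^{c_0(1+2d)/2}$. The constants are independent of $h\le1$, of the shift, and of the cell.

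\textbf{Logarithmic inversion.} This step is immediate. If $r_m^U>0$, take logarithms in $r_m^U\le A_0e^{-c_1w_m^U}$ to get $c_1w_m^U\le\log(A_0/r_m^U)$. This is exactly the weight condition needed by Lemma~\ref{lem:deterministic-cell}, with $A=A_0$ and $B=1/c_1$. Enlarging $A_0$ if necessary also makes $\log(A_0/r)\ge1$ there.

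\textbf{Lattice sum.} Write
\begin{equation}
 \sum_m (r_m^U)^\rho\le A_0^\rho\sum_m e^{-\rho c_1 w_m^U}\le A_0^\rho\sum_m e^{-\rho c_1|x_m^*|^2}.
\end{equation}
The main obstacle is converting this lattice sum into an integral with the correct $h^{-d}$ scaling, uniformly in the shift. My approach is as follows. For every $y\in B_m^U$ we have $|y|\le|x_m^*|+\sqrt d$, so $|x_m^*|^2\ge \tfrac12|y|^2-d$. Therefore
\begin{equation}
 e^{-\rho c_1|x_m^*|^2}\le e^{\rho c_1 d}\,\frac1{h^d}\int_{B_m^U}e^{-\rho c_1|y|^2/2}\dd y .
\end{equation}
Summing over the disjoint cells, which partition $\R^d$, gives
\begin{equation}
 \sum_m (r_m^U)^\rho\le A_0^\rho e^{\rho c_1 d}h^{-d}\int_{\R^d}e^{-\rho c_1|y|^2/2}\dd y = C_\rho h^{-d}.
\end{equation}
The constant $C_\rho$ depends only on $d,\rho,C_0,c_0$. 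Cells with $r_m^U=0$ contribute nothing to the sum, so they do not affect the argument. The only care needed is that the comparison constants use $h\le1$, so the cell diameter is at most $\sqrt d$. This keeps every bound uniform in $h$ and in $U$.
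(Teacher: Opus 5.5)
Your proof is correct and takes essentially the paper's approach. Like the paper, you compare the infimum and supremum of $|x|$ over a cell of diameter at most $\sqrt d$ to obtain $r_m^U\le A_0e^{-c_1w_m^U}$ with $c_1=c_0/2$, and you bound each lattice-sum term by a cell average of a Gaussian before summing over the partition to get the $h^{-d}$ scaling; you only make explicit the constants and the averaging step that the paper sketches.
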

\begin{proof}
Let $R_m=\sup_{x\in B_m^U}|x|$ and $\underline R_m=\inf_{x\in B_m^U}|x|$. Since the cell diameter is at most $\sqrt d$, $\underline R_m^2\ge\tfrac12R_m^2-d$. Thus $r_m^U\le C_0e^{-c_0\underline R_m^2} \le C_0e^{c_0d}e^{-(c_0/2)R_m^2}$, proving the cell envelope and resulting logarithmic bound. Bounding the discrete sum by the continuous integral of a Gaussian over each cell yields $\sum_m(r_m^U)^\rho\le C_\rho h^{-d}$, uniformly in the shift.
\end{proof}

\begin{proof}[Proof of Theorem \ref{thm:weighted-mgf}]
Cells where $r_m = 0$ enforce $P_m = 0$, leading to $n_m = 0$ almost surely, yielding a trivial $0=0$ error contribution. We Poissonize the system by drawing independent Poisson counts $K_m$ in each disjoint cell with means $\lambda_m = (N-1)v r_m$. The Gaussian geometry (Lemma \ref{lem:gaussian-cells}) guarantees $w_m \le c_1^{-1}\log(A_0/r_m)$ and lattice summability. Applying Lemma \ref{lem:one-cell} to each cell gives bounds on the individual moment generating functions. Truncating to a finite number of cells and passing to the limit via monotone convergence justifies the infinite product, yielding $\log\E\exp(\alpha_0 S_N(K)) \le C\sum_m r_m^\rho \le Ch^{-d}$. 
We utilize the exact de-Poissonization identity:
\begin{equation}
 \E_{\mathrm{Mult}(N,(P_m))}\left[e^{\alpha_0 S_N}\right] = \E\left[ e^{\alpha_0 S_N(K)} \;\middle|\; \sum_m K_m=N \right].
\end{equation}
Conditioning on $\sum_m K_m=N$ recovers the multinomial occupancy distribution of $N$ i.i.d.\ samples from $p$, namely the occupancy law under the product measure $Q_t^N=p_t^{\otimes N}$. The subsequent entropy-transfer argument passes this estimate to the interacting law $P_t^N$. The cost of this conditioning is governed by Stirling's approximation: $\mathbb P(\operatorname{Poi}(N-1)=N) \asymp N^{-1/2}$. Dividing the unconditioned expectation by this probability penalty and extracting the logarithm yields the additive penalty: $\log\E_{Q_t^N} e^{\alpha_0 S_N} \le Ch^{-d}+\frac12\log N+C$. 
Finally, we handle the $L$ overlapping shifted grids. The empirical error is bounded by $\frac{L_\Xi^2}{L} \sum_{\ell=1}^L S_N^{(\ell)}$. If $L_\Xi>0$, we set $\alpha_* = \alpha_0 / L_\Xi^2$. Applying H\"older's inequality averages the $L$ one-grid log-MGFs (which holds regardless of dependence between the shifted grids): $$\E[\exp(\frac{\alpha_0}{L} \sum S_N^{(\ell)})] \le \prod (\E \exp(\alpha_0 S_N^{(\ell)}))^{1/L}.$$ Taking the logarithm cancels the $1/L$ exponential weight, preventing any exponential penalty in $L$ and finalizing the proof.
\end{proof}

\section{Proofs of Main Text Results}
\label{app:proofs}

\begin{proof}[Proof of Lemma~\ref{lem:weighted-bias}]
Because $p_t\le C_0<R$, clipping is inactive at $p_t(x)$ and at every local cell average. First, the Gaussian density envelope alone provides a zero-order estimate. Because the cells are half-open cubes of diameter at most $\sqrt{d}\,h$, any $y \in B_m^{U_\ell}$ containing $x$ satisfies $|y| \ge (|x| - \sqrt{d}\,h)_+$. Integrating over the cell gives
\begin{equation}
    \Pi_{h,U_\ell}p_t(x) \le C_0 \exp\!\left[-c_0(|x|-\sqrt{d}\,h)_+^2\right] \le C e^{-c'|x|^2}, \qquad 0<h\le 1,
\end{equation}
uniformly in the shift. This demonstrates that cell averages inherit Gaussian decay. (The constants in this cell average bound may be larger than the original $C_0$, but the qualitative decay is preserved). Using the Lipschitz continuity of $\Xi$, we have $|\Xi(\Rop p_t(x))-\Xi(p_t(x))| \le L_\Xi|\Rop p_t(x)-p_t(x)| \le C e^{-c'|x|^2}$. Multiplying this squared difference by the linear growth bound $|\nabla\Phi(x)|^2\le C_\Phi(1+|x|^2)$, the Gaussian decay absorbs the polynomial growth $C(1+|x|^2)e^{-2c'|x|^2} \le C_T$, yielding the global bound $\beta_h(t) \le C_T$.
Second, for the gradient-based estimate, the Mean Value Theorem gives for any $x,y$ within the same $h$-cell: $|p_t(y)-p_t(x)| \le |y-x|\int_0^1 |\nabla p_t(x+\theta(y-x))|\dd\theta$. Since the intra-cell distance is bounded by $|x-y|\le\sqrt d\,h$, the gradient bound from Assumption~\ref{ass:density} implies $|\Pi_{h,U}p_t(x)-p_t(x)| \le C_T h \max\{1,t^{-1/2}\} (1+|x|)^m e^{-a_1|x|^2}$ for some $a_1>0$, uniformly in $h$ and $U$. Applying Jensen's inequality and the Lipschitz continuity of $\Xi$, we obtain $|\Xi(\Rop p_t(x))-\Xi(p_t(x))|^2 \le C_T h^2 \max\{1,t^{-1}\}(1+|x|)^{2m}e^{-2a_1|x|^2}$. Multiplying this by the linear growth bound $|\nabla\Phi(x)|^2\le C_\Phi(1+|x|^2)$ absorbs the polynomial term into the exponential tail (for some $a' < a_1$), yielding $\beta_h(t) \le C_T h^2\max\{1,t^{-1}\}$. To conclude $\beta_h(t)\le C_T h^2/t$ for $0<t\le T$, we write $\max\{1,t^{-1}\}\le \frac{1\vee T}{t}$, making the dependence on $T$ transparent.
Combining these two bounds yields $\beta_h(t) \le C_T \min\{1, h^2/t\}$.
\end{proof}

\begin{proof}[Proof of Theorem~\ref{thm:main-poc} and Corollaries]
We track $H_N(t)=\Ent(P_t^N\mid Q_t^N)$ against the independent product measure $Q_t^N=p_t^{\otimes N}$. Under the reference law, the uncoupled drift for particle $i$ is $\overline b_t(x_i)=-\Xi(p_t(x_i))\nabla\Phi(x_i)$. Inserting the population histogram field $\Rop p_t$ between the empirical estimator and the true density, the Lipschitz continuity of $\Xi$ and Lemma~\ref{lem:weighted-bias} decompose the drift error: $|b_i^N(t,x)-\overline b_t(x_i)|^2 \le C(1+|x_i|^2) |\Xi(\hRop(x_i;x))-\Xi(\Rop p_t(x_i))|^2 + \beta_h(t)$. Summing the first term over all particles defines $\mathcal S_t^N(x)$. 
Using Lemma~\ref{lem:entropy-production}, we have the integral bound:
\begin{equation}
 H_N(t) \le C \int_0^t \E_{P_s^N}[\mathcal S_s^N] \dd s + C N \int_0^t \beta_h(s) \dd s.
\end{equation}
Applying the Donsker-Varadhan variational principle inside the integral gives 
\begin{equation}
\E_{P_s^N}[\mathcal S_s^N] \le \frac{1}{\alpha_*}H_N(s) + \frac{1}{\alpha_*}\log\E_{Q_s^N} \bigl[e^{\alpha_*\mathcal S_s^N}\bigr].
\end{equation}
The expectation inside the logarithm is evaluated under the independent measure $Q_s^N$. Theorem~\ref{thm:weighted-mgf} ensures $\log\E_{Q_s^N} [e^{\alpha_*\mathcal S_s^N}] \le C(h^{-d}+\log N)$. Because the PDE envelopes hold uniformly over $s \in [0,T]$, the constants are independent of time. Substituting this back yields the integral inequality:
\begin{equation}
 H_N(t) \le C \int_0^t H_N(s) \dd s + C t(h^{-d} + \log N) + C N \int_0^t \beta_h(s) \dd s.
\end{equation}
The small-time singularity in $\beta_h(s)$ resolves upon time integration:
\begin{equation}
 \int_0^t \beta_h(s) \dd s \le C_T \int_0^t \min\left\{1,\frac{h^2}{s}\right\} \dd s \le C_T h^2\left(1+\log_+\frac{t}{h^2}\right) \le C_T h^2 \bigl(1+|\log h|\bigr).
\end{equation}
Applying the integral form of Gronwall's inequality captures this logarithmic loss and proves \eqref{eq:full-entropy-main}. We note that the constant $C_T$ tracks the application of Gronwall's inequality and may grow exponentially like $e^{C T}$ or $e^{C T}$ times a polynomial.
The entropy version of Shearer's inequality applied to all $k$-coordinate marginals, combined with the symmetry of $P_t^N$, gives $\frac1k\Ent(P_t^{N,k}\mid p_t^{\otimes k}) \le \frac1N\Ent(P_t^N\mid p_t^{\otimes N})$ (see, e.g., \cite{Lacker2023}), yielding \eqref{eq:local-entropy-main}. Applying Pinsker's inequality translates this into the Total Variation metric \eqref{eq:tv-main}. 
From the variational principle and Theorem~\ref{thm:weighted-mgf}, we have the instantaneous bound:
\begin{equation}
 \frac1N\E_{P_t^N}\mathcal S_t^N \le C\frac{H_N(t)}N+ C\frac{h^{-d}+\log N}{N}.
\end{equation}
Integrating this over time, invoking the uniform bound on $H_N(t)$ from \eqref{eq:full-entropy-main}, applying exchangeability to the decomposed drift error, and adding the time integral of $\beta_h(t)$, proves the displayed time-integrated drift estimate \eqref{eq:drift-chaos-main}.
Setting the optimally balanced bandwidth $h_N\asymp(N\log N)^{-1/(d+2)}$ evaluates to
\begin{equation}
h^2 |\log h| \asymp N^{-\frac{2}{d+2}}(\log N)^{\frac{d}{d+2}} \quad \text{and} \quad \frac{h^{-d}}{N} \asymp N^{-\frac{2}{d+2}}(\log N)^{\frac{d}{d+2}}.
\end{equation}
Because $d\ge1$, the residual term $(\log N)/N$ is bounded by a constant multiple of this rate, balancing the terms and yielding the algebraic rates in Corollary~\ref{cor:optimized}.
Provided $1 \le k_N \le N$ and $k_N N^{-2/(d+2)}(\log N)^{d/(d+2)} \to 0$, the relative entropy vanishes asymptotically, extending the propagation of chaos to growing ensembles in Corollary~\ref{cor:growing-marginals}.
\end{proof}

\end{document}